\newcommandx{\Max}[2][1=]{\todo[inline, author={Maxime}, linecolor=green,backgroundcolor=green!25,bordercolor=green,#1]{#2}}
\newtheorem{theorem}{Theorem}[section]
\newtheorem{definition}{Definition}[section]
\newtheorem{lem}{Lemma}[section]
\renewcommand{\P}{\mathbb{P}}
\newcommand{\R}{\mathbb{R}}
\newcommand{\E}{\mathbb{E}}
\newcommand{\ind}{\mathds{1}}
\title{Local differential privacy in survival analysis using private failure indicators}
\author{
{\large Maxime Egéa}\footnote{Email: \texttt{maxime.egea@univ-angers.fr}}\\ \textit{LAREMA, Université d'Angers} 
\and
{\large Mikael Escobar-Bach}\footnote{Email: \texttt{mikael.escobar-bach@univ-angers.fr}.  \vspace*{.1cm}} \\ \textit{LAREMA, Université d'Angers}  
}
\begin{document}

\maketitle

\begin{abstract}
This work considers survival estimation with censored data under setups that preserve individual privacy. We provide an $\alpha$-locally differentially private mechanism on failure indicators and propose a non-parametric kernel estimator for the cumulative hazard function. Under mild conditions, we also prove lowers bounds on the minimax rates of convergence and show that our estimator is minimax optimal under well-chosen bandwidths. The method is illustrated with numerical results on synthetic data.
\end{abstract}

\smallskip
\noindent {\bf Key Words:} survival analysis; local differential privacy; right-censoring; minimax-optimality. 

\section{Introduction}
Censored data analysis is always a difficult challenge due the incompleteness nature of the observations. The censoring mechanism imposes a stringent setup for the statisticians and requires a dedicated methodology to obtain consistent statistical tools. Besides, studies in survival analysis usually apply to sensitive data where privacy protection is of crucial importance. In health care research or medicine, the release of shared databases has particularly increased the demand in guidelines for sanitized data. Differential privacy has prevailed as a strong candidate and provides a mathematical framework that helps in developing privacy-preserving methods \cite{Blu08,Dwo13}. In this framework, privacy mechanisms are considered as randomized algorithms that take an original database and produce a new set of random variables, from which the statistical analysis is solely based. Simultaneously, it is also important to properly define the privacy mechanism so that one can control the trade-off between the privacy protection and the statistical utility of the outputs. An algorithm is called differentially private if the change of at most one individual in the initial database only differs the likelihoods of the privatized databases by a small amount.\\ 
In a general manner, the randomization can be trusted to one common curator with a whole access to the raw data. However, scenarios in practice may require that each private sample are released by the data owners separately. This particular setup is referred to as local differential privacy and applies when privatized outputs are generated one at a time. Formally, an initial database $\{X_1,\ldots,X_n\}$ produces randomized data $\{Z_1\ldots,Z_n\}$ defined on a measurable space $(\mathcal{Z}^n,\mathcal{B}^n)$ such that $Z_i$ is generated accordingly to $X_i$ and $\{Z_1,\ldots,Z_{i-1}\}$. The random generation is described through a Markov kernel $Q:\mathcal{B}\times(\mathbb{R}^d\times\mathcal{Z}^{i-1})\to[0,1]$. The privacy mechanism is then called $\alpha$-locally differentially private for $\alpha>0$ if
\begin{eqnarray*}
    \sup_{B\in\mathcal{B}}\sup_{z_1,\ldots,z_{i-1}\in\mathcal{Z}}\sup_{x,x'\in\mathbb{R}^p}\dfrac{Q(B|X_i=x,Z_{i-1}=z_{i-1},\ldots,Z_1=z_1)}{Q(B|X_i=x',Z_{i-1}=z_{i-1},\ldots,Z_1=z_1)}\leq e^\alpha.   
\end{eqnarray*}
The parameter $\alpha>0$ controls the balance between privacy and statistical accuracy where privacy protection is strengthen as the parameter becomes smaller. Under this assumption, the random generation is interactive and each data provider can rely on previously transformed inputs to generate new outputs. However, many local privacy mechanisms are simpler and do not depend on some external information. A non-interactive privacy mechanism is then reduced to the same setting although $Z_i$ is now independent from the random variables $\{Z_1,\ldots,Z_{i-1}\}$ simplifying the previous condition to 
\begin{eqnarray}
\label{eq::ldp}
    \sup_{B\in\mathcal{B}}\sup_{x,x'\in\mathbb{R}^p}\dfrac{Q(B|X_i=x)}{Q(B|X_i=x')}\leq e^\alpha.   
\end{eqnarray}
 The literature on differential privacy is very active and more recent works with privacy purposes aim at developing machine learning algorithms; see \cite{Wan23} for a recent review, or to understand the statistical properties of the privacy mechanisms \cite{Nar23,Amo23}. Statistical utility of privacy mechanisms is usually based on the minimax framework and has debuted in \cite{Dwo10,Was10,Was12} before it has been applied to locally private procedures; see e.g \cite{Duc18,But21,Amo23}, and global ones \cite{Liu19}. In survival analysis, differential privacy is still at its early stage of development and many authors acknowledge the need for more attention in the future \cite{Fic21}. Although censoring prevents some of the desired data from getting recorded, there exists no value of $\alpha$ such that the inequality (\ref{eq::ldp}) is true if $Q$ describes the censoring mechanism. In a sense, censoring does not provide an adequate privacy protection from a differential privacy perspective. Recent works have attempted to propose privacy procedures for health databases \cite{Bon22} based on differential privacy, but most of the literature has explored the problem heuristically with time-to-event data \cite{OKe12,Bon22a} or connections between likelihood estimation and global privacy \cite{Ngu17}. In general, privacy in survival analysis focuses only on the output transformations of standard statistical methods, but does not propose privacy procedures for survival databases. Indeed, the relationships between survival outcomes make it difficult to consider any marginal resampling without altering model assumptions.\\ 
 In this paper, we study the estimation of the survival time distribution in the context of local differential privacy. We consider a regression model with independent censoring and propose to privatize the censoring indicators throughout an $\alpha$-locally differentially private mechanism. Although this setup is less restrictive than privatizing all the inputs, it still prevails any external observer to assess whether a survival time is censored or not. In fact, the joint information of all outcomes is actually useful and observation times alone are insufficient to infer the survival time distribution. Such approach is closely related to user's label privacy; see e.g. \cite{Cha11,Bei16,Wan19} where in this line of works, the authors consider differentially private mechanisms that allow the release of data without sensitive information. Under this setting, we propose an ad-hoc version of the conditional Nelson-Aalen estimator and show that it can achieve the minimax convergence rate when the bandwidth is correctly specified. 

\section{Model}

\subsection{Notations and estimators}

We consider a random vector $(T,C,X)$ taking values in $\R_+ \times \R_+ \times \R^p$ where $T$ defines the survival time, $C$ the censoring time and $X$ the covariate vector with support in $\mathcal{S}_X$ and density $f$. In the context of survival analysis, we assume that the observed data are restricted to $(Y,\delta,X)$ with support in $\mathcal{S}_Y\times \{0,1\}$ where $Y=T\wedge C$ is the observation time and $\delta=\ind_{\{T \leq C \}}$ is the failure or censoring indicator. Recall that under independent censoring, we assume that $T$ and $C$ are independent conditionally on $X$. The conditional distribution functions of the survival time and the censoring time are respectively denoted by $F_T$ and $F_C$ and given by 
\begin{equation*}
    F_T(t|x):= \P ( T \le t | X=x ) \quad \mathrm{and} \quad     F_C(t|x):= \P ( C \le t | X=x ),\quad\forall t\geq0
\end{equation*}
where $x\in\text{int}(\mathcal{S}_X)$ denotes a fixed covariate position. For any generic distribution function $F$, its queue function is denoted $\overline{F}=1-F$ for simplicity, which allows to write the conditional distribution function of $Y$ under independent censoring as
\begin{eqnarray*}
    H(t|x):=\P(Y\leq t|X=x)=1-\overline{F}_T(t|x)\overline{F}_C(t|x)).
\end{eqnarray*}
We will consider survival models with continuous random variables and as such denote $g$ for the conditional density function of $Y$ given $X$. We further denote by $\Lambda_T$ the conditional cumulative hazard function of the survival time $T$ with relationship between $\Lambda_T$ and $F$ given by
\begin{eqnarray*}
    1-F_T(t|x)=\exp(-\Lambda_T(t|x)).
\end{eqnarray*}
Let assume an independent and identically distributed (\textit{i.i.d.}) $n$-sized sample drawn from the censoring model $\{(Y_i,\delta_i,X_i)\}_{1\leq i\leq n}$. In our context, we also consider the privatized version $Z_i$ of the censoring indicators $\delta_i$ and recall that the sample $\{(Y_i,Z_i,X_i)\}_{1\leq i\leq n}$ is exclusively observed. The relationship between observation times and censoring indicators is characterized by the sub-distribution function $H^u$ which is defined by
\begin{eqnarray*}
        H^u(t|x):=\P(Y\leq t,\delta=1|X=x).
\end{eqnarray*}
Straightforward algebra shows that with a continuous random variable $T$ 
\begin{eqnarray*}
    \Lambda_T(t|x)=\int_0^t\dfrac{dH^u(s|x)}{1-H(s|x)}.
\end{eqnarray*}
This justifies the estimator construction of our privatized version $\widehat{\Lambda}_n$ given by
\begin{eqnarray*}
   \widehat{\Lambda}_n(t|x):=\int_0^t\dfrac{d\widehat{H}^u_n(s|x)}{1-H_n(s|x)}
\end{eqnarray*}
where $\widehat{H}^u_n$ and $H_n$ respectively denote empirical counterparts of the sub-distribution functions based on Nadaraya-Watson type estimators and given by
\begin{gather*}
H_n(t|x):=\sum_{i=1}^nW_h(x-X_i)\ind_{\{Y_i\leq t\}}\quad\text{and}\quad \widehat{H}_n^u(t|x):=\sum_{i=1}^nW_h(x-X_i)\ind_{\{Y_i\leq t\}}\widehat p_n(Y_i,X_i).
\end{gather*}
Note here that we consider weights
\begin{eqnarray*}
W_h(x-X_i):=\dfrac{K_h(x-X_i)}{\sum_{j=1}^nK_h(x-X_j)},\quad i=1,\ldots,n
\end{eqnarray*}
where $K_h(\cdot)=K(\cdot/h)/h^p$ with $K$ a kernel function and $h=h_n$ a non-random positive sequence such that $h_n\rightarrow 0$ as $n\rightarrow \infty$. The statistic $\widehat p_n$ denotes a privacy robust estimator of $p:=\mathbb{P}(T\leq C|Y=.,X=.)$ and has to be adequately chosen in order to replace the privatized censoring indicators. In this work, we propose another kernel type estimator given by
\begin{eqnarray}
\label{def::pn}
    \widehat p_n(y,x)=\sum_{j=1}^n\widetilde W_{b}((y,x)-(Y_j,X_j))Z_j,\quad y\in\mathbb{R}_+,\,x\in\mathcal{S}_X
\end{eqnarray}
where the weights are similarly defined as above with  
\begin{eqnarray*}
\widetilde W_{b}((y,x)-(Y_j,X_j)):=\dfrac{K_{b}(x-X_j)\widetilde K_b(y-Y_j)}{\sum_{k=1}^n K_b(x-X_k)\widetilde K_{b}(y-Y_k)},\quad j=1,\cdots,n
\end{eqnarray*}
and $\widetilde K$ and $b$ are equally defined but different kernel function and bandwidth than $K$ and $h$. Performance estimation will be addressed through the minimax framework by considering a pointwise risk as the mean integrated quadratic error of the privatized version of the Nelson-Aalen estimator with respect to the response variable $Y$. The criterion is then given by
\begin{equation*}
    \left\| \widehat{\Lambda}_n(.|x)- \Lambda_T(.|x) \right\|^2_{[t_0,t_1]}:=\E \left[\int_a^b \left(\widehat{\Lambda}(s|x) - \Lambda_T(s|x) \right)^2 ds\right]
\end{equation*}
where $[t_0,t_1]$ is any interval included in the interior of the support of $Y$ such that $H(t_1|x)<1$. This measure is rather classical in non-parametric functional analysis and makes particularly sense if one is interested in estimating the conditional distribution function of $T$ conditionally in $X=x$. Indeed, within the interval $[t_0,t_1]$, one can show that the discrepancy between $\Lambda_n$ and the Beran estimator \cite{Ber81} is relatively negligible (see for instance Lemma 2.1 in \cite{Esc23-3}) so that both estimators can be interchangeably used without lost of generality. In particular, straightforward application of the mean value theorem allows to have
\begin{eqnarray*}
    \left\| 1-\exp\left(-\widehat{\Lambda}_n(.|x)\right)- F_T(.|x) \right\|^2_{[t_0,t_1]}=O\left( \left\| \widehat{\Lambda}_n(.|x)- \Lambda_T(.|x) \right\|^2_{[t_0,t_1]}\right)
\end{eqnarray*}
so that  $1-\exp(-\hat{\Lambda}(.|x))$ consistently estimates the conditional distribution function $F_T$ with the same rate of convergence than that of $\hat{\Lambda}(.|x)$.\\

\subsection{Assumptions}
In order to prove the results in the sequel, we need to have some regularity assumptions on the model functions with Hölder type conditions. We here resume the required conditions:
\begin{itemize}
    \item $(\mathcal{H})$ : there exists $ 0 < \beta,\eta \le  1$ and a constant $c>0$ such that for any $t,s \in \mathcal{S}_Y$, $x,y \in \mathcal{S}_X$, 
    \begin{equation*}
    \begin{split}
        (\mathcal{H}.1) & \quad |f(x)-f(y)| \le c\|x-y\|^{\beta}, \\
        (\mathcal{H}.2) & \quad |g(t|x)-g(t|y)| \le c\|x-y\|^{\beta}, \\
        (\mathcal{H}.3) & \quad |H(t|x)-H(t|y)| \le c\|x-y\|^{\beta}, \\
        (\mathcal{H}.4) & \quad |F(t|x)-F(s|x)| \le c|t-s|^{\eta}, \\
        (\mathcal{H}.5) & \quad |H^u(t|x)-H^u(s|y)| \le c\left(|t-s|^{\eta}+\|x-y\|^{\beta}\right). \\
    \end{split}
    \end{equation*}
    We also denote $\mathcal{P}_{\eta,\beta}$ the space of the probability measures that defines the laws of the random vectors $(T,C,X)$ under assumption $\mathcal{H}$. Likely, we define the space of probability measures $\mathcal{P}^{(obs)}_{\eta,\beta}$ given by the laws of the random vectors $(Y,\delta,X)$ under assumption $\mathcal{H}$. Finally, we assume that the densities $f$ and $g$ are bounded uniformly on $\mathcal{S}_X$ and $\mathcal{S}_Y$.
    
    \item $(\mathcal{K})$ : the kernels $x\to K(x)$ and $(x,y)\to K(x)\widetilde K(y)$ are bounded density functions with supports the unit ball of $\R^p$ and $\R^{1+p}$ with respect to the euclidean norm $\|\cdot \|$. Likewise \cite{art:GineGuillou2002Rates}, we assume that the functions are square integrable and in the linear span (the set of finite linear combinations) of functions $k \geq 0$ satisfying the following property: the subgraph of $k$ ca be represented as a finite number of Boolean operations among sets of the form $\{(s,u): p(s,u)\geq\phi(u)\}$, where $p$ is a polynomial on either $\R^p\times\R$ or $\R^{1+p}\times \R$ and $\phi$ is an arbitrary real function. Alhtough this assumption seems quite technical, it is verified for a wide range of common kernel functions. We also assume that there exit positive constants $0<c_K,C_K$ (resp.  $0<c_{\widetilde{K}},C_{\widetilde{K}}$) such that $c_K\leq K(x)\leq C_K$ (resp. $c_{\widetilde{K}}\leq \widetilde K(x)\leq C_{\widetilde {K}}$) for all $x$ in the kernel support. 
\end{itemize}

\section{Local differential privacy}
\label{sec::ldp}


As mentioned in the introduction, we choose to preserve the privacy of the censoring indicators only. A particular reason is that we wish to maintain statistical efficiency of standard non-parametric survival statistics, like the Nelson-Aalen or Beran estimators. Another reason is that any random transformation of $Y$ might violate the independent censoring assumption, and thus potentially lead to biased estimates for most methods in the literature.\\
Let $\widetilde{\mathcal{Q}}_\alpha$ denotes the set of Markov kernels, also called channels, that take a random vector $(Y,\delta,X)$ as an input and publish $(Y,Z,X)$. For any $\widetilde{Q}\in\widetilde{\mathcal{Q}}_\alpha$, the resulting channel $Q$ that takes $\delta$ and gives $Z$ is $\alpha$-locally differentially private as defined in (\ref{eq::ldp}). We here consider that a censoring indicator is privatized by the addition of an independent random variable with centred Laplace distribution. Let $f^{(obs)}$ be the density of $(Y,\delta,X)$, it follows that $(Y,Z,X)$ has a density $m$ against the Lebesgue measure on $\R^2$ and the law of $X$ such that
\begin{eqnarray*}
    m(t,z,x)&:=&\sum_{b=0}^1  q_\alpha(z|b) f^{(obs)}(t,b,x)\\
    &=&\sum_{b=0}^1  q_\alpha(z|b)\left[\,\overline{F}_C(t|x) f_T(t|x)\right]^{b} \left[\,\overline{F}_T(t|x) f_C(t|x) \right]^{1-b} 
\end{eqnarray*}
where the density channel function is given by
\begin{eqnarray*}
    q_\alpha(z|b):=\dfrac{\alpha}{2}\exp\left(-\alpha|z-b|\right),\quad\forall z\in\R.
\end{eqnarray*}
As expected, the privacy mechanism is $\alpha$-differentially private in a non-interactive way and belongs to the expected set of channels.
\begin{lem}
\label{lem::privacy}
    Let $\widetilde{Q}$ defines the privacy mechanism that takes $(Y,\delta,X)$ and returns $(Y,Z,X)$ as described above. Then $\widetilde{Q}\in\widetilde{\mathcal{Q}}_\alpha$.
\end{lem}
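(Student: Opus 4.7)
The plan is to unpack what membership in $\widetilde{\mathcal{Q}}_\alpha$ actually requires. By the paragraph preceding the lemma, $\widetilde{Q}$ belongs to $\widetilde{\mathcal{Q}}_\alpha$ exactly when the induced channel $Q$ that maps the sensitive input $\delta$ to the released output $Z$ satisfies the non-interactive $\alpha$-LDP condition \eqref{eq::ldp}. Since $Y$ and $X$ are passed through unchanged and, by construction, $Z$ is generated from $\delta$ alone (with noise independent of $(Y,X)$), the conditional law of $Z$ given $(\delta,Y,X)$ coincides with its law given $\delta$, so it suffices to verify \eqref{eq::ldp} for the marginal kernel $Q(B|\delta=b)=\int_B q_\alpha(z|b)\,dz$ with $b$ ranging over the binary input space $\{0,1\}$.

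The main step is then a pointwise bound on the ratio of the Laplace densities. Fix $b,b'\in\{0,1\}$ and $z\in\R$. The reverse triangle inequality yields
\begin{equation*}
\bigl||z-b|-|z-b'|\bigr|\le |b-b'|\le 1,
\end{equation*}
so that
\begin{equation*}
\frac{q_\alpha(z|b)}{q_\alpha(z|b')}=\exp\bigl(\alpha(|z-b'|-|z-b|)\bigr)\le e^{\alpha|b-b'|}\le e^{\alpha}.
\end{equation*}
This is the entire technical content: on the two-point input set $\{0,1\}$, the sensitivity of the identity map is exactly one, which is why a Laplace noise with scale parameter $\alpha^{-1}$ achieves $\alpha$-LDP.

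To conclude, I would integrate the pointwise inequality $q_\alpha(z|b)\le e^\alpha\, q_\alpha(z|b')$ over any Borel set $B\in\mathcal{B}(\R)$ to obtain $Q(B|\delta=b)\le e^\alpha\, Q(B|\delta=b')$. Taking suprema over $B$ and over $b,b'\in\{0,1\}$ delivers \eqref{eq::ldp} for the channel $Q$, so $\widetilde{Q}\in\widetilde{\mathcal{Q}}_\alpha$. There is essentially no obstacle here: the only care needed is to recognise that the privacy requirement bears solely on the censoring indicator and that passing $Y,X$ through deterministically neither helps nor hurts the ratio bound, which is why the computation reduces to the classical sensitivity-one Laplace mechanism argument.
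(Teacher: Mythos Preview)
Your proof is correct and follows essentially the same approach as the paper: both reduce the LDP condition to the pointwise density ratio $q_\alpha(z|b)/q_\alpha(z|b')$ and bound it by $e^\alpha$ via the (reverse) triangle inequality applied to $|z-b'|-|z-b|$. Your version is slightly more explicit in justifying why the problem reduces to the channel on $\delta$ alone and in passing from densities to probabilities of Borel sets, but the substance is identical.
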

\noindent
Label differential privacy proposes an alternative definition of privacy constraint with scenarios where some of the outcomes are not necessarily sensitive and can be made publicly available. This definition was introduced in \cite{Cha11}, generalized in \cite{Bus23} and based on the Rényi divergence.
\begin{definition}
    For any $\gamma>1$, the Rényi $\gamma$-divergence between distributions $P$ and $Q$ is given by
    \begin{eqnarray*}
        D_\gamma(P||Q)=\dfrac{1}{\gamma-1}\log\E_Q\left[\left(\dfrac{dP}{dQ}(z)\right)^\gamma\right]
    \end{eqnarray*}
    if the Radon-Nykodim density $\frac{dP}{dQ}$ is well defined and $+\infty$ otherwise.
\end{definition}
\noindent
It follows then that for measurable spaces $(S_i,\mathcal{S}_i)$, $i=1,2$, any channel $Q:(S_1\times S_2)\times(\mathcal{S}_1\otimes\mathcal{S}_2)\to[0,1]$ is $(\gamma,\varepsilon)$-label Rényi differentially private if for all $y,y'\in S_2$ and $x\in S_1$
\begin{eqnarray*}
    D_\gamma(Q(x,y)||Q(x,y'))\leq\varepsilon.
\end{eqnarray*}
where we allow channels to reveal parts of the inputs. In the following result, we show that our privacy channel $\widetilde{Q}$ also preserves user's label privacy with parameters depending from the differential privacy index $\alpha$.
\begin{lem}
\label{lem::labelprivacy}
    Let $\widetilde{Q}$ defines the privacy mechanism that takes $(Y,\delta,X)$ and returns $(Y,Z,X)$ as described above. Then $\widetilde{Q}$ is $(\gamma,\varepsilon)$-label Rényi differentially private for any $\gamma>1$ and $\varepsilon=\alpha$.
\end{lem}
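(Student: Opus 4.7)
The plan is to reduce the label Rényi divergence to the Rényi divergence between the two Laplace densities $q_\alpha(\cdot|0)$ and $q_\alpha(\cdot|1)$, and then exploit the fact that pure $\alpha$-differential privacy (already established in Lemma \ref{lem::privacy}) yields $(\gamma,\alpha)$-Rényi privacy for every $\gamma>1$.

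First, I would fix the label interpretation: the sensitive input is $\delta \in \{0,1\}$ and the non-sensitive component is $(Y,X)$, so in the notation of the definition we take $S_1$ as the support of $(Y,X)$ and $S_2=\{0,1\}$. Since $\widetilde{Q}$ leaves $(Y,X)$ untouched and only adds Laplace noise to $\delta$, the conditional distribution of the output $(Y,Z,X)$ given input $((y,x),\delta=b)$ factorizes as a Dirac mass at $(y,x)$ in the first and third coordinates and $q_\alpha(\cdot\mid b)\,dz$ in the middle coordinate. For any two labels $b,b' \in \{0,1\}$ and any fixed $(y,x)$, the Radon-Nikodym derivative of $\widetilde{Q}(\cdot\mid (y,x),b)$ with respect to $\widetilde{Q}(\cdot\mid (y,x),b')$ is therefore well defined and equals $q_\alpha(z\mid b)/q_\alpha(z\mid b')$, the $(y,x)$-factors cancelling out. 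Hence the divergence reduces to
\begin{equation*}
D_\gamma\bigl(\widetilde{Q}(\cdot\mid (y,x),b)\,\|\,\widetilde{Q}(\cdot\mid (y,x),b')\bigr) \;=\; D_\gamma\bigl(q_\alpha(\cdot\mid b)\,\|\,q_\alpha(\cdot\mid b')\bigr).
\end{equation*}

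Second, I would bound this Laplace-against-Laplace divergence. From the proof of Lemma \ref{lem::privacy} (or by direct computation using the reverse triangle inequality $\bigl||z-b|-|z-b'|\bigr|\leq |b-b'|\leq 1$), the pointwise ratio satisfies $q_\alpha(z\mid b)/q_\alpha(z\mid b') \leq e^\alpha$ uniformly in $z$. I would then rewrite the Rényi integrand via the change of measure
\begin{equation*}
\mathbb{E}_{q_\alpha(\cdot\mid b')}\!\left[\left(\dfrac{q_\alpha(z\mid b)}{q_\alpha(z\mid b')}\right)^{\!\gamma}\right] \;=\; \mathbb{E}_{q_\alpha(\cdot\mid b)}\!\left[\left(\dfrac{q_\alpha(z\mid b)}{q_\alpha(z\mid b')}\right)^{\!\gamma-1}\right] \;\leq\; e^{(\gamma-1)\alpha},
\end{equation*}
using the uniform ratio bound and the fact that $\gamma-1>0$. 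Taking $\frac{1}{\gamma-1}\log$ of both sides gives $D_\gamma(q_\alpha(\cdot\mid b)\|q_\alpha(\cdot\mid b')) \leq \alpha$.

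Combining these two steps yields $D_\gamma(\widetilde{Q}(\cdot\mid (y,x),b)\|\widetilde{Q}(\cdot\mid (y,x),b'))\leq\alpha$ uniformly over $b,b'\in\{0,1\}$ and $(y,x)\in S_1$, which is precisely the $(\gamma,\alpha)$-label Rényi differential privacy claim. I do not anticipate a serious obstacle here: the only subtle point is the bookkeeping needed to verify that the Dirac components in $Y$ and $X$ really do cancel inside the Radon-Nikodym derivative (so that no $+\infty$ term appears in the divergence), which is immediate because both output laws share the same singular support in the $(y,x)$-coordinates for any fixed non-sensitive input.
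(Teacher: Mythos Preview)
Your proposal is correct and follows essentially the same route as the paper: both arguments reduce the label Rényi divergence to the pointwise likelihood ratio $q_\alpha(z\mid b)/q_\alpha(z\mid b')\le e^\alpha$ already obtained in Lemma~\ref{lem::privacy}. The paper simply states the ratio bound and declares ``the result follows'', whereas you spell out the standard implication from a uniform $e^\alpha$ ratio bound to $D_\gamma\le\alpha$ via the change-of-measure identity; this extra step is correct and arguably clarifies what the paper leaves implicit.
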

\noindent

\section{Risk optimization}
In this section, we derive lower bounds for the minimax risk of the cumulative hazard function estimation and provide risk upper bounds for $\widehat{\Lambda}_n$. In particular, we show that the minimax rate of convergence is reached for our estimator when the bandwidth $h$ is selected according to the parameter $\beta$.\\

\textbf{Minimax risk}: we consider lower risk bounds over the set of distributions $\mathcal{P}_{\eta,\beta}$ and the $\alpha$-locally differentially private mechanisms $\widetilde{Q}$, as described in Section \ref{sec::ldp}. Note that we distinguish the set $\mathcal{P}_{\eta,\beta} $ of distributions $(T,C,X)$ which define the latent model and the set $\mathcal{P}_{\eta,\beta}^{(obs)} $ of the observation distributions $(Y,\delta,X)$. Concentration of the covariate distribution around $x$ is controlled throughout small ball probabilities; see e.g. \cite{art:ChagnyRoche_AdaptiveMinimax}, \cite{ferraty2006nonparametric}. We here consider the simple case where there exist some constants $c_X$, $C_X>0$, $\gamma >0$ such that for any $h>0$
\begin{equation*}
    c_X h^\gamma \le \P ( \| X-x \| \le h ) \le C_X h^\gamma.\tag{$\mathcal{H}_X$}
\end{equation*}
This assumption is quite reasonable for distributions with support on bounded subsets and  is less restrictive than the model assumptions in $\mathcal{H}$. In our context, we will show in Lemma \ref{lem:smallballHolderversion} that this assumption is fulfilled with $\gamma=p$. We are now ready to derive the convergence rates for the minimax risk.
\begin{theorem}\label{theorem:minimax} Assume $(\mathcal{H})$ and $(\mathcal{H}_X)$. Let $x \in \text{int}(\mathcal{S}_X)$ and $\alpha \in (0,1)$, then we have 
\begin{equation*}
     \inf_{\tilde Q \in \mathcal{\tilde Q_\alpha}}\inf_{ \widetilde{\Lambda}_n} \sup_{P \in \mathcal{P}_{\eta,\beta}} \E_{P,\tilde Q} \left[  \left\| \widetilde{\Lambda}_n(.|x)- \Lambda_{T(P)}(.|x)\right\|^2_{[t_0,t_1]} \right] \ge C_{t_0,t_1}\left(\frac{1}{\alpha^2n }\right)^{\frac{2\beta}{2\beta+ \gamma}}
\end{equation*}
where $T(P)$ denotes the random survival time issued from the distribution $P$, the infimum is over all the possible estimator $\widetilde{\Lambda}_n$ of the sample $\{(Y_i,Z_i,X_i),i=1,\ldots,n\}$ and $C_{t_0,t_1}$ is an explicit constant given in the proof.
\end{theorem}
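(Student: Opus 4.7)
The plan is to apply Le Cam's two-point method together with the Duchi--Jordan--Wainwright (DJW) information contraction for $\alpha$-LDP channels, tuned to the \emph{label}-DP nature of the problem. Because only $\delta$ is privatized while $(Y,X)$ remain public, an informative pair of hypotheses should share the same joint law of $(Y,X)$ and differ only through $p(\delta=1\mid Y,X)$; otherwise the non-privatized part of the observation leaks classical-rate information and the $\alpha^2$-contraction cannot appear.

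\smallskip\noindent
\textbf{Construction.} Let $\phi:\R^p\to\R$ be a smooth bump supported on the unit ball with $\phi(0)>0$ and $\chi:\R_+\to\R$ a smooth function supported on $[t_0,t_1]$ with $\int_{t_0}^{t_1}\chi>0$. For $h>0$ small, define $P_0,P_1\in\mathcal{P}_{\eta,\beta}$ sharing a common covariate density $f$ obeying $(\mathcal{H}_X)$ and a common conditional density $g(\cdot\mid u)$ of $Y$ given $X=u$, with $H(t_1\mid u)<1$ uniformly for $u$ in a neighbourhood of $x$, but differing subdistribution functions
\[
H^u_1(t\mid u)-H^u_0(t\mid u)\;=\;h^{\beta}\,\phi\!\left(\tfrac{u-x}{h}\right)\int_0^t\chi(s)\,ds.
\]
The censoring-side subdistribution $H-H^u$ is adjusted to keep $H$ (hence $g$) invariant; this is feasible for $h$ small since the perturbation has magnitude $O(h^{\beta})$, and the smoothness of $\phi,\chi$ together with a direct verification ensures both laws remain in $\mathcal{P}_{\eta,\beta}$.

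\smallskip\noindent
\textbf{Separation and divergence.} Using $\Lambda_T(t\mid u)=\int_0^t dH^u(s\mid u)/(1-H(s\mid u))$ and the invariance of $H$ near $x$,
\[
\Lambda_{T(P_1)}(t\mid x)-\Lambda_{T(P_0)}(t\mid x)=h^{\beta}\phi(0)\int_0^t\frac{\chi(s)}{1-H(s\mid x)}\,ds,
\]
which gives $\|\Lambda_{T(P_1)}(\cdot\mid x)-\Lambda_{T(P_0)}(\cdot\mid x)\|_{[t_0,t_1]}^2\geq c_1 h^{2\beta}$ for an explicit $c_1>0$. On the other hand, because $(Y,X)$ share a common law under $P_0,P_1$, conditioning on $(Y,X)$ reduces each coordinate of the LDP mechanism to a binary-input channel, and applying the single-sample DJW contraction to each conditional channel, then tensorising, yields
\[
D_{\mathrm{KL}}(\tilde M_0^n\,\|\,\tilde M_1^n)\;\leq\;4n(e^{\alpha}-1)^2\,\E\!\left[\bigl(p_0(1\mid Y,X)-p_1(1\mid Y,X)\bigr)^2\right]\;\leq\;Cn\alpha^2 h^{2\beta+\gamma},
\]
since $|p_0-p_1|\leq Ch^{\beta}$ on $[t_0,t_1]\times\{\|u-x\|\leq h\}$ and $(\mathcal{H}_X)$ gives $\P(\|X-x\|\leq h)\leq C_X h^{\gamma}$.

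\smallskip\noindent
\textbf{Conclusion and main obstacle.} Le Cam's two-point lemma combined with Pinsker's inequality produces a lower bound $\tfrac{c_1}{8}\,h^{2\beta}\bigl(1-\sqrt{Cn\alpha^2 h^{2\beta+\gamma}/2}\bigr)$; setting $h\asymp(n\alpha^2)^{-1/(2\beta+\gamma)}$ keeps the parenthesis bounded away from $0$ and yields the announced rate $C_{t_0,t_1}(n\alpha^2)^{-2\beta/(2\beta+\gamma)}$. The crucial technical point is the divergence estimate: obtaining $h^{2\beta+\gamma}$ rather than the $h^{2\beta+2\gamma}$ that a naive joint-TV DJW bound would produce relies on first conditioning on the public pair $(Y,X)$, which is precisely the label-DP structure recorded in Lemma \ref{lem::labelprivacy}. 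The other subtlety is ensuring the perturbed subdistributions remain valid probability laws inside $\mathcal{P}_{\eta,\beta}$: the positivity of $dH^u_j$ and $d(H-H^u_j)$ constrains $h$ to lie below a threshold depending only on the fixed base model and on $\phi,\chi$, which is automatic asymptotically.
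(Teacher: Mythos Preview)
Your proposal is correct and follows the same overall route as the paper: Le Cam's two-point method with a pair of hypotheses sharing the law of $(Y,X)$ so that only the privatized label carries information, followed by an $(e^\alpha-1)^2$-type contraction and the small-ball bound $(\mathcal{H}_X)$ to obtain a symmetrized KL of order $n\alpha^2 h^{2\beta+\gamma}$, and then the optimal choice $h\asymp(n\alpha^2)^{-1/(2\beta+\gamma)}$.

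The implementations differ in two places worth noting. First, the paper builds the pair by specifying $(F_{T_0},F_{C_0})$ and $(F_{T_1},F_{C_1})$ explicitly (with $T_0$ uniform on $[t_0,t_1]$ and $F_1(t\mid u)=F_0(t)+\eta_n^\beta I(\|u-x\|/\eta_n)\int^t\psi$) and then \emph{swaps} the censoring laws, setting $C_0\overset{d}{=}T_1$ and $C_1\overset{d}{=}T_0$, which forces the observation-time distribution $1-\bar F_T\bar F_C$ to coincide under both models; you instead fix $H$ and perturb $H^u$ directly, which is an equivalent parametrization but sidesteps the swap device. Second, for the divergence bound the paper writes the mixture densities $m_0,m_1$ explicitly and reproduces the DJW argument by hand (decomposing into positive and negative parts and using $\sup_\delta q(z\mid\delta)/\inf_\delta q(z\mid\delta)\le e^\alpha$), whereas your chain-rule step---conditioning on the public $(Y,X)$, then invoking the DJW KL contraction on the residual binary-input channel---is more modular and makes transparent why matching the $(Y,X)$-marginals is essential. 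Both routes yield the same bound, with your version arguably closer in spirit to the label-DP formulation of Lemma~\ref{lem::labelprivacy}.
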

\noindent
Note that in this theorem, we have only studied the minimax rate for a pointwise risk in $x$, although a similar result can be shown for an integrated risk following the arguments of \cite{book:Tsybakov_nonparam} and \cite{art:ChagnyRoche_AdaptiveMinimax}. We however have chosen to omit the proof since the upper bound developed in the following part applies only for the pointwise risk. It also is noteworthy that the rate of convergence almost meets the same optimal rate obtained by \cite{art:ChagnyRoche_AdaptiveMinimax} without any privacy mechanism. In particular, we see that when the privacy level $\alpha$ increases, it decreases the optimal statistical efficiency that one can expect from the best estimator applied to the randomized data.\\
\bigskip

\textbf{Estimator's risk}: in order to obtain risk upper bounds for the estimator $\widehat{\Lambda}_n$, we consider Proposition 2.1 in \cite{Esc23-3} about the almost-sure representation of the Nelson-Aalen estimator with generic indicators. The result is applied in our context with indicators replaced by the privatized version $Z_i$'s. The independence between the privacy mechanism and the prior survival model shows that the privatized indicators return the same conditional expectation than that of the censoring indicators, that is
\begin{eqnarray*}
    \E\left[Z|Y,X\right]=\P(\delta=1|Y,X),\quad a.s
\end{eqnarray*}
which already ensures that our estimator $\widehat{\Lambda}_n$ is unbiased according to Corollary 2.1 in \cite{Esc23-3}. 

\begin{theorem}\label{Theo:Upperbound} Assume $(\mathcal{K})$ and $(\mathcal{H})$ with $nh^{2\beta+p}|\log h|^{-1}=\mathcal{O}(1)$ and $h=\mathcal{O}(b^{(1+p)/p})$. Let $x \in \text{int}(\mathcal{S}_X)$ such that $\inf_{y\leq t_1}g(y|x)>0$, then there exists a constant $D_{t_0,t_1}$ with
    \begin{eqnarray*}
        \E_{P,\widetilde{Q}} \left[  \left\| \widehat{\Lambda}_n(.|x)- \Lambda_{T(P)}(.|x) \right\|^2_{[t_0,t_1]} \right] \le D_{t_0,t_1} \left(h^{2\beta}  + \frac{1}{n  h^p}\left(1 +\frac{1}{\alpha^2}\right) \right)
    \end{eqnarray*}
    such that $\widetilde{Q}$ is the privacy mechanism described above and $P\in\mathcal{P}_{\eta,\beta}$. Note that we retrieve the same convergence rate than that of Theorem \ref{theorem:minimax} with $h=[\alpha^2 n]^{-\frac{1}{2\beta+p}}$.
\end{theorem}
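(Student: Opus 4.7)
My plan is to apply the almost-sure Nelson--Aalen representation of Proposition 2.1 in \cite{Esc23-3} to linearize the error $\widehat{\Lambda}_n(t|x) - \Lambda_T(t|x)$ in terms of the deviations $\widehat{H}^u_n - H^u$ and $H_n - H$. After an integration by parts on the leading stochastic term and the identification of a higher-order remainder $R_n$ controlled under $(\mathcal{K})$ and $nh^{2\beta+p}|\log h|^{-1} = \mathcal{O}(1)$, one arrives at a decomposition of the form
\begin{equation*}
\widehat{\Lambda}_n(t|x) - \Lambda_T(t|x) = \frac{(\widehat{H}^u_n - H^u)(t|x)}{1-H(t|x)} + \int_0^t \frac{(\widehat{H}^u_n - H^u)(s|x)}{(1-H(s|x))^2}\, dH(s|x) + \int_0^t \frac{(H_n - H)(s|x)}{(1-H(s|x))^2}\, dH^u(s|x) + R_n(t|x).
\end{equation*}
Since $\inf_{s \le t_1}(1 - H(s|x)) > 0$, it is then enough to bound the pointwise MISE of $\widehat{H}^u_n(\cdot|x)$ and $H_n(\cdot|x)$ by $O(h^{2\beta} + n^{-1}h^{-p}(1 + \alpha^{-2}))$ uniformly on $[t_0, t_1]$ and to integrate.

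The first piece $H_n - H$ is a classical Nadaraya--Watson estimator of $H$; using $(\mathcal{H}.1)$ and $(\mathcal{H}.3)$ gives bias $O(h^\beta)$ and variance $O(1/(nh^p))$, without any privacy penalty. For $\widehat{H}^u_n - H^u$, I would use $Z_j = \delta_j + L_j$ with $L_j$ an independent Laplace variable of scale $1/\alpha$, and rewrite the estimator as
\begin{equation*}
\widehat{H}^u_n(t|x) = \sum_{j=1}^n (\delta_j + L_j)\, \mathcal{B}_j(t|x),\qquad \mathcal{B}_j(t|x) := \sum_{i=1}^n W_h(x-X_i)\, \ind_{\{Y_i \le t\}}\, \widetilde{W}_b((Y_i, X_i) - (Y_j, X_j)),
\end{equation*}
which decouples the Laplace noise from the double-kernel weights. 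The non-private part $\sum_j \delta_j\, \mathcal{B}_j(t|x)$ is a doubly-smoothed empirical sub-distribution whose bias and variance I would analyse via $(\mathcal{H}.2)$, $(\mathcal{H}.5)$ and the identity $p(v,u)\, g(v|u) = \partial_v H^u(v|u)$. The bandwidth condition $h = \mathcal{O}(b^{(1+p)/p})$ is precisely what is required here so that the interplay of the two smoothings keeps the bias at $O(h^\beta)$ and the variance at $O(1/(nh^p))$.

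For the Laplace contribution, conditioning on the observations $\{(Y_k, X_k, \delta_k)\}_{k=1}^n$ the terms $L_j\, \mathcal{B}_j(t|x)$ are independent, centred, and have variance $\frac{2}{\alpha^2} \mathcal{B}_j(t|x)^2$, so that
\begin{equation*}
\mathrm{Var}\!\left(\sum_{j=1}^n L_j\, \mathcal{B}_j(t|x) \,\Big|\, \text{obs.}\right) = \frac{2}{\alpha^2} \sum_{j=1}^n \mathcal{B}_j(t|x)^2.
\end{equation*}
Observing that $\sum_j \mathcal{B}_j(t|x) = \sum_i W_h(x-X_i)\, \ind_{\{Y_i \le t\}} \le 1$ and that only $O(nh^p)$ indices contribute effectively to $\mathcal{B}_j(t|x)$ through the outer kernel $W_h$, I would combine $(\mathcal{K})$ with empirical-process arguments in the spirit of \cite{art:GineGuillou2002Rates} to get $\sum_j \mathcal{B}_j(t|x)^2 = O(1/(nh^p))$ uniformly in $t \in [t_0, t_1]$. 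This yields a privacy-induced variance of order $1/(\alpha^2 n h^p)$, which combined with the non-private variance gives the factor $(1 + \alpha^{-2})/(nh^p)$ in the bound.

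The hard part is the simultaneous control of the two interacting bandwidths $h$ and $b$. Bounding $\sum_j \mathcal{B}_j^2$ uniformly requires a careful treatment of the random denominators in the Nadaraya--Watson weights, and checking that the inner smoothing by $\widetilde{W}_b$ does not inflate the bias of $\widehat{H}^u_n$ beyond $O(h^\beta)$ is precisely where the condition $h = \mathcal{O}(b^{(1+p)/p})$ is essential. The remainder $R_n$ from the representation is routine but must be verified to be of smaller order than the leading bias/variance terms, which follows from uniform consistency of $H_n$ under $(\mathcal{K})$ and from the lower bound on $h$ provided by $nh^{2\beta+p}|\log h|^{-1} = \mathcal{O}(1)$.
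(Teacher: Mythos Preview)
Your route is workable but differs from the paper's in one structural choice that matters. The paper does \emph{not} linearize $\widehat\Lambda_n-\Lambda_T$ directly. It first introduces the intermediate estimator $\Lambda_n(t|x)=\int_0^t(1-H_n)^{-1}\,dH_n^u$, where $H_n^u$ is built with the \emph{true} conditional probabilities $p(Y_i,X_i)=\P(\delta=1\mid Y_i,X_i)$ as indicators, and splits $\widehat\Lambda_n-\Lambda_T=(\widehat\Lambda_n-\Lambda_n)+(\Lambda_n-\Lambda_T)$. Proposition~2.1 of \cite{Esc23-3} and the Gin\'e--Guillou remainder control are applied only to $\Lambda_n-\Lambda_T$, whose indicators are bounded in $[0,1]$; this gives $h^{2\beta}+1/(nh^p)$ exactly as in your $H_n-H$ analysis. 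The privacy piece $\widehat\Lambda_n-\Lambda_n=\sum_i\omega_i(1-H_n(Y_i|x))^{-1}\sum_j\omega_{i,j}(\delta_j-p(Y_i,X_i)+\mu_j)$ has no remainder: an outer Jensen inequality plus a direct conditional second-moment bound on the inner $\widetilde W_b$-sum yields $(1+\alpha^{-2})/(nb^{1+p})$, and the bandwidth relation $h^p=\mathcal O(b^{1+p})$ converts this to $(1+\alpha^{-2})/(nh^p)$. Your sum-swap $\widehat H_n^u=\sum_j(\delta_j+L_j)\mathcal B_j$ and the conditional Laplace variance are essentially a repackaging of this last step; the claim $\sum_j\mathcal B_j^2=\mathcal O(1/(nh^p))$ is correct, but not because ``$\mathcal O(nh^p)$ indices contribute through $W_h$'' (that counts $i$'s, not $j$'s) --- the clean argument is Jensen $(\sum_i a_i w_{ij})^2\le\sum_i a_i w_{ij}^2$, then $\sum_j w_{ij}^2=\mathcal O(1/(nb^{1+p}))$ uniformly in $i$, then $h^p=\mathcal O(b^{1+p})$.

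The place where your direct linearization bites is the remainder $R_n$. Because you expand $\widehat\Lambda_n$ itself, $R_n$ contains a term of the type $\int_0^t\big[(1-H_n)^{-1}-(1-H)^{-1}\big]\,d(\widehat H_n^u-H^u)$, and bounding it in $L^2$ requires control of the total variation of $\widehat H_n^u$, i.e.\ of $\sum_i W_h(x-X_i)|\widehat p_n(Y_i,X_i)|$. Since $\widehat p_n$ inherits Laplace tails this quantity is not uniformly bounded, and the ``routine'' argument you invoke (uniform consistency of $H_n$ alone) does not close the estimate; you would need an additional moment bound on that sum. The paper's split avoids this entirely: the only remainder it must control comes from $\Lambda_n$, whose indicators lie in $[0,1]$, while the unbounded $\widehat p_n$ lives exclusively in the linear piece $\widehat\Lambda_n-\Lambda_n$. (Minor: the sign on your second displayed integral should be negative, by integration by parts.)
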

\noindent
We observe that additional convergence rates appears in the upper bounds. The reason is that the almost-sure representation of the estimator takes the form of a sum of different statistics where not all of them are impacted by the privacy mechanism. On one hand, the rate $h^{2\beta}$ results from the estimator bias and the regularity conditions of the model functions, which partially appears in the risk upper bounds in \cite{art:ChagnyRoche_AdaptiveMinimax}. On the other hand, the rate $1/nh^p$ results from the remaining statistics where no private outcomes are considered and actually represent the rate of convergence for the Nelson-Aalen estimator without privacy. The rate $h^{2\beta}$ is the dominant convergence term which guarantee the estimator to be optimal if the the bandwidth is tuned accordingly to the model functions' regularity.  

\section{Simulations}

In this section, we propose a short simulation studies in order to illustrate the behaviour of our privacy procedure under different settings, all conducted with the language \texttt{R}. The random variable $X$ is distributed according to an uniform law on $[0,1]$ and both the random times $T$ and $C$ are built from $X$ according to exponential laws respectively with rates $\langle \lambda_T,(1,X,X^2)\rangle$ and  $\langle \lambda_C,(1,X,X^2)\rangle$ where $\lambda_T,\lambda_C\in\R^3$. Note that the choice of vectors $\lambda_T$ and $\lambda_C$ allows us to control the level of censorship in our model given by
\begin{equation*}
    \P(T \le C |X=x) = \frac{\langle \lambda_T, (1,x,x^2) \rangle }{\langle \lambda_T+\lambda_C, (1,x,x^2) \rangle},\quad x\in[0,1].
\end{equation*}
We construct the privatized data using \textit{i.i.d.} Laplace random variables $\{\mu_k\}_{1\leq k\leq n}$ with parameter and privacy level $\alpha >0$. The privatized observed vector is then given by $(Y_k,Z_k,X_k)_{1\leq k\leq n}$ where $Z_k=\delta_k + \mu_k $. The efficiency degradation due to our privacy procedure will be discussed with comparison  between our method and the Beran estimator $F_n$ without privacy. We furthermore denote the estimator with cleaned data $\widecheck{F}_n$, also referred to as generalized Beran estimator (see \cite{Esc23-3} for more details) so that the conditional probability estimator is given by 
\begin{eqnarray}
\label{def::pnnonprivate}
    \widecheck p_n(y,x)\coloneqq\sum_{j=1}^n\widetilde W_{b}((y,x)-(Y_j,X_j))\delta_j,\quad y\in\mathbb{R}_+,\,x\in\mathcal{S}_X.
\end{eqnarray}
Overall, we consider the uniform kernel function $K:x\in\mathbb{R}\to\frac{1}{2}\ind_{\{|x|\le 1\}}$ and select the bandwidth $h$ as 5 times the return of the function \texttt{dpik} from the \texttt{R}-package \texttt{KernSmooth}. The conditional probability estimators in \eqref{def::pn} and \eqref{def::pnnonprivate} are computed with $\widetilde{K}=K$ and $b=\sqrt{h}$, in accordance with the assumptions of Theorem \ref{Theo:Upperbound}. Several series of simulations are done for each scenarios with $N=300$ samples of size $n=500$. As a comparative measure, we consider the mean squared error (MSE) 
\begin{eqnarray*}
    \mathrm{MSE}_x(t)= \frac{1}{N}\sum_{k=1}^N \left( E^{(k)}(t|x)- F_T(t|x) \right)^2,\quad t\geq0,\,x\in[0,1]
\end{eqnarray*}
where $E^{(k)}$ is any estimators $\widehat{F}_n$, $\widecheck{F}_n$ or $F_n$ based on the $k$-th sample. We performed several simulations with various values of $x$ and observed no significant differences between the performances. We thus have chosen to only display here the results for $x=0.5$. Two censoring proportions are displayed with $25\%$ and $50\%$ respectively based on the vector combinations $(1,1,1)=\lambda_T=\lambda_C$ and $(1,1,1)=\lambda_T=\lambda_C/3$. We make vary the privacy level from more to less privacy by selecting $\alpha=0.2,0.3$ and $0.4$. The simulation results of our experiment are regrouped in Figure \ref{fig:CompareMSEscenario1} with the MSE curves in function of time $t$.\\
Without privacy, the generalized Beran estimator shows the smallest variability and always outperforms its regular counterpart. This has already been discussed and studied in \cite{Esc23-3} and will serve as standard performances to evaluate the statistical efficiency of our privacy procedure. Overall, we observe that our approach returns the highest MSE values when $\alpha$ is small, while relaxing the privacy constrain with larger $\alpha$ values returns MSE curves similar to the generalized Beran estimator. This is expected since the privacy procedure is based on noise perturbed indicators, which drastically interfere in the estimator stability. In a sense, the performances of the generalized Beran estimator represents the best attainable results since both statistics share the same definition when $\alpha\to +\infty$. It particularly appears that our methods shares similar curves than the Beran estimator without privacy when $\alpha=0.3$. Nevertheless, the simulations have shown that biases among the three different methods are similar, meaning that our estimator remains as efficient as non-private procedures. Censoring proportion also interfere in the estimation efficiency with higher MSE curves when censoring increases. However, this behaviour equally affects all the methods and does not change the latter comparison. By averaging the privatized indicators, $\widehat{p}_n$ is robust despite the noise perturbation and provide reasonable probability estimates when used in the estimation of the survival distribution. This conveniently ensures that we can obtain a private procedure with equalled performances than that of standard non-private methods.


\begin{figure}[H]
    \centering
    \includegraphics[scale=.45,trim= 4cm 0cm 1.5cm 0cm]{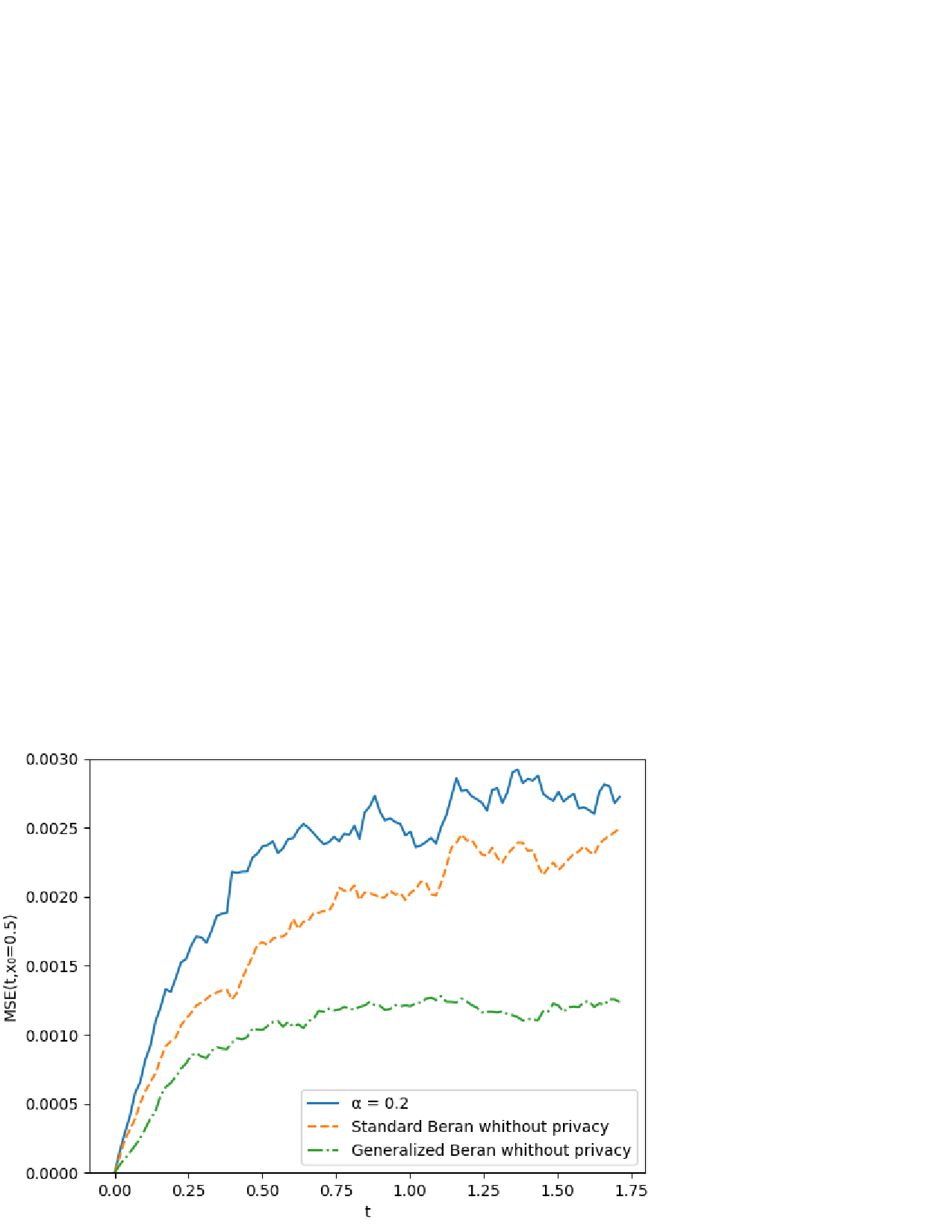}
    \includegraphics[scale=.45,trim= 2.05cm 0cm 1.5cm 0cm,clip]{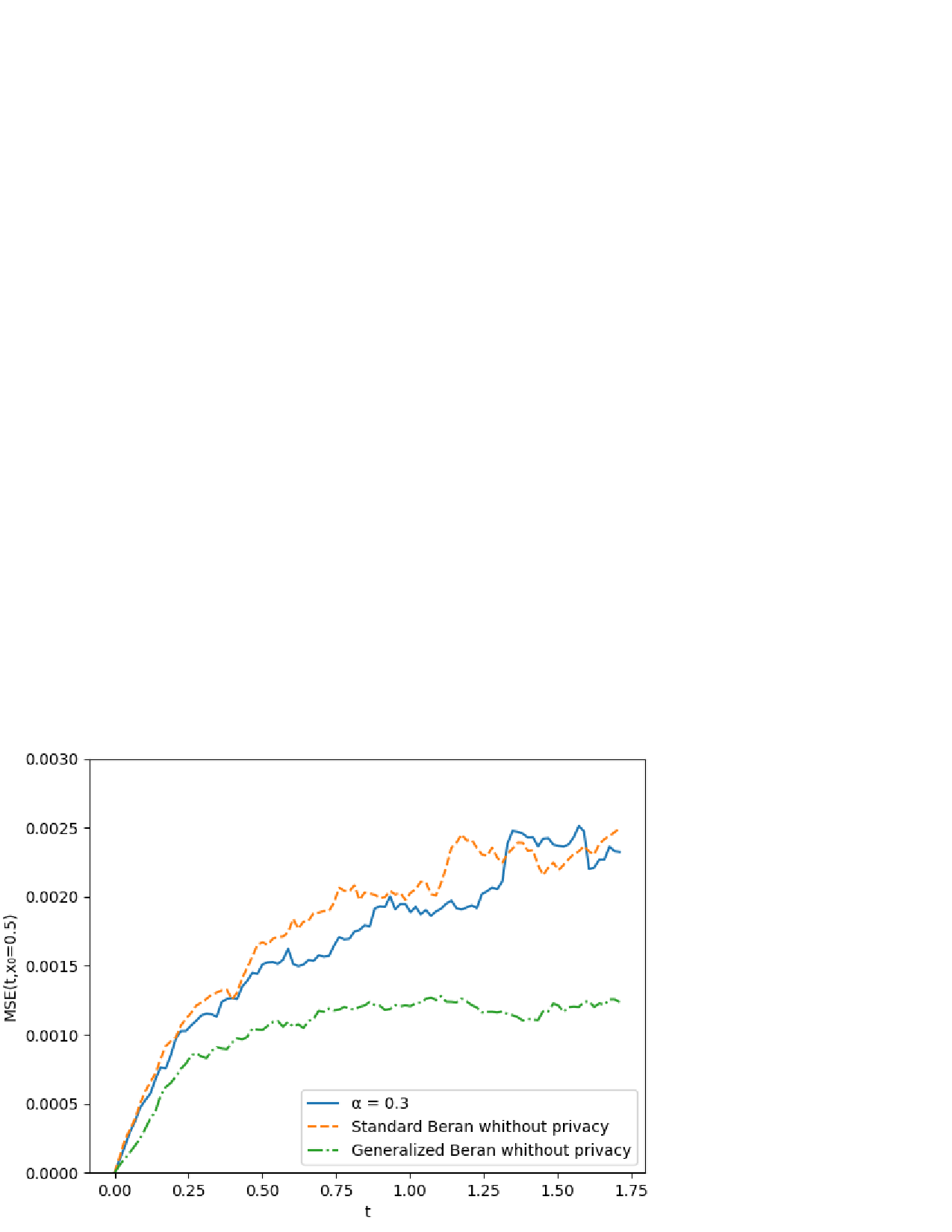}
    \includegraphics[scale=.45,trim= 2.05cm 0cm 1.5cm 0cm,clip]{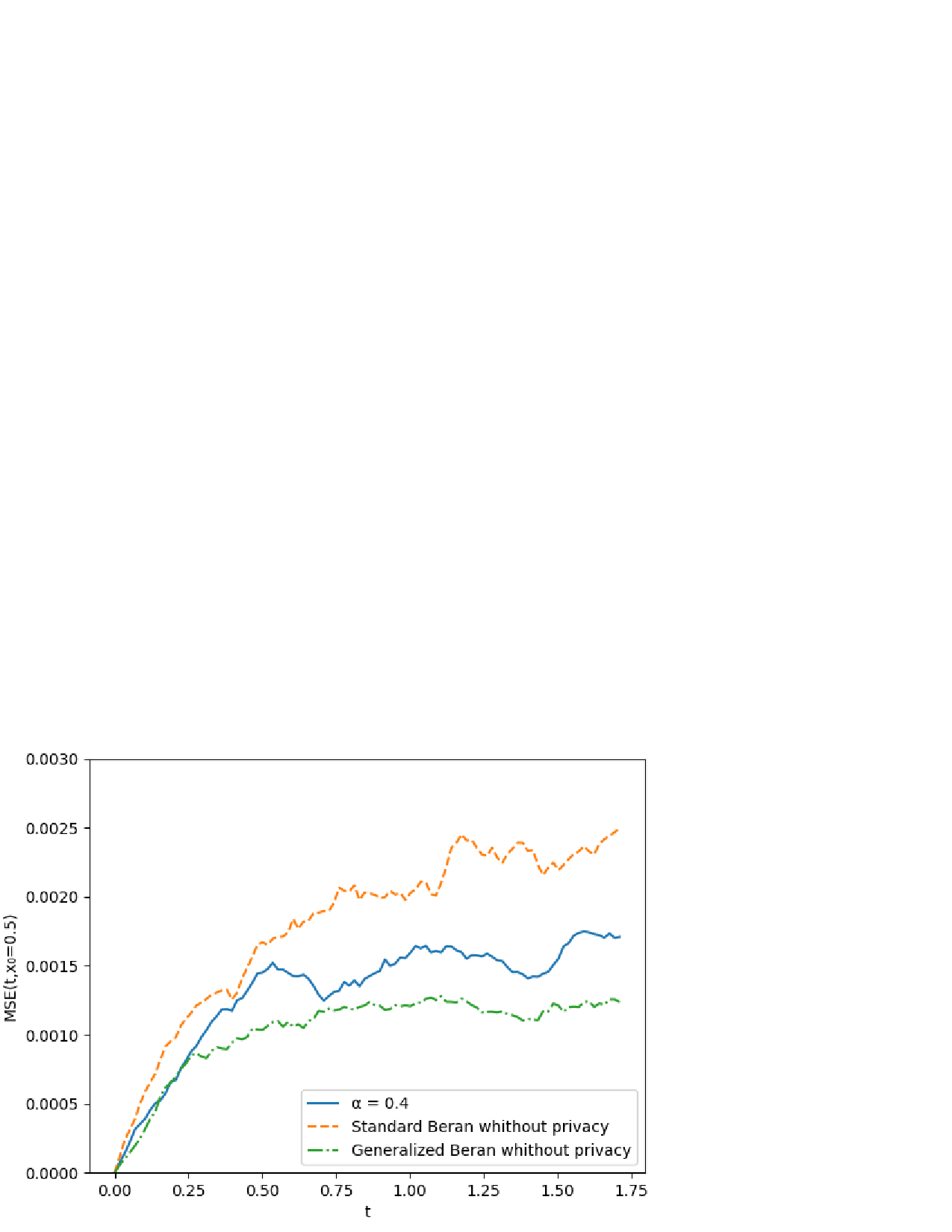}
    \includegraphics[scale=.45,trim= 4cm 0cm 1.5cm 0cm]{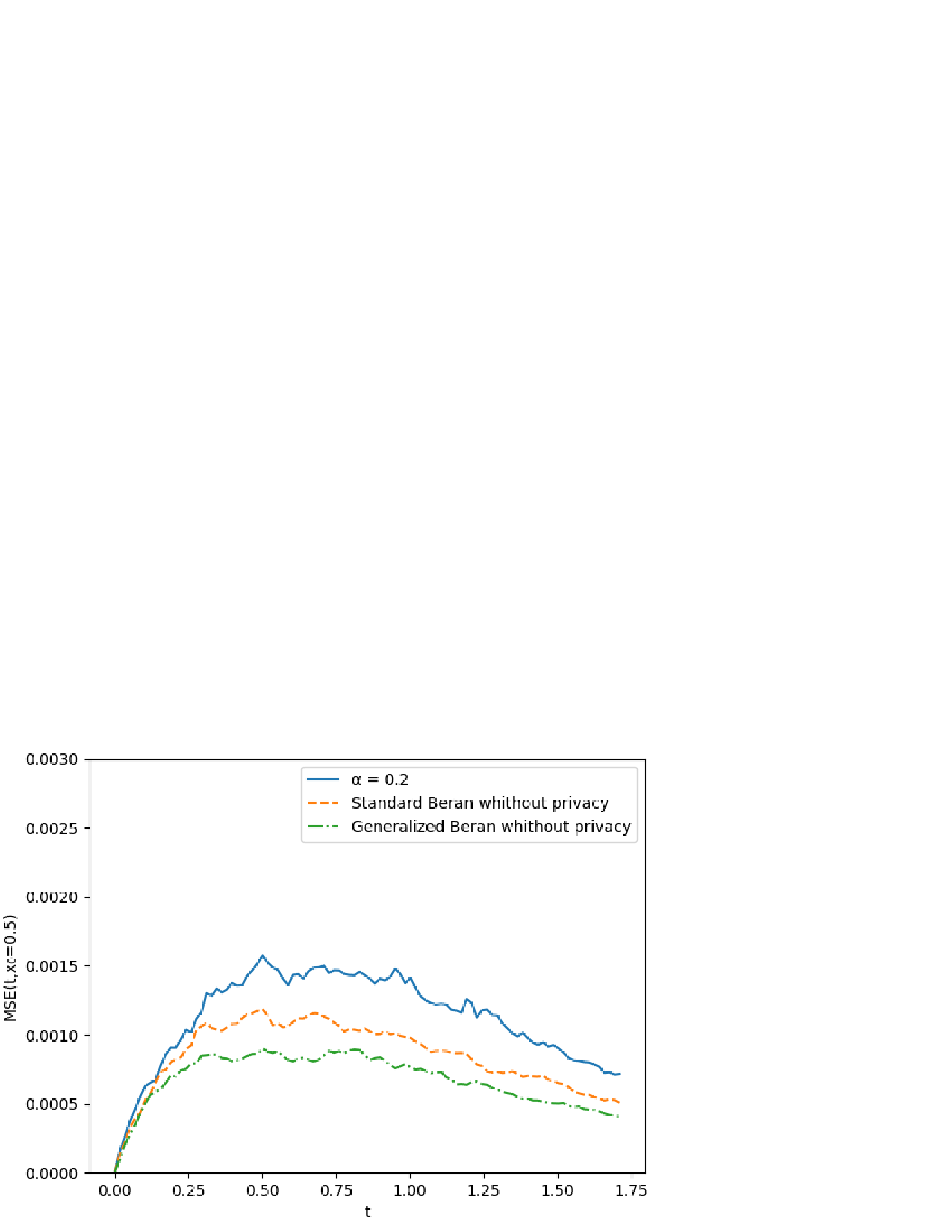}
    \includegraphics[scale=.45,trim= 2.05cm 0cm 1.5cm 0cm,clip]{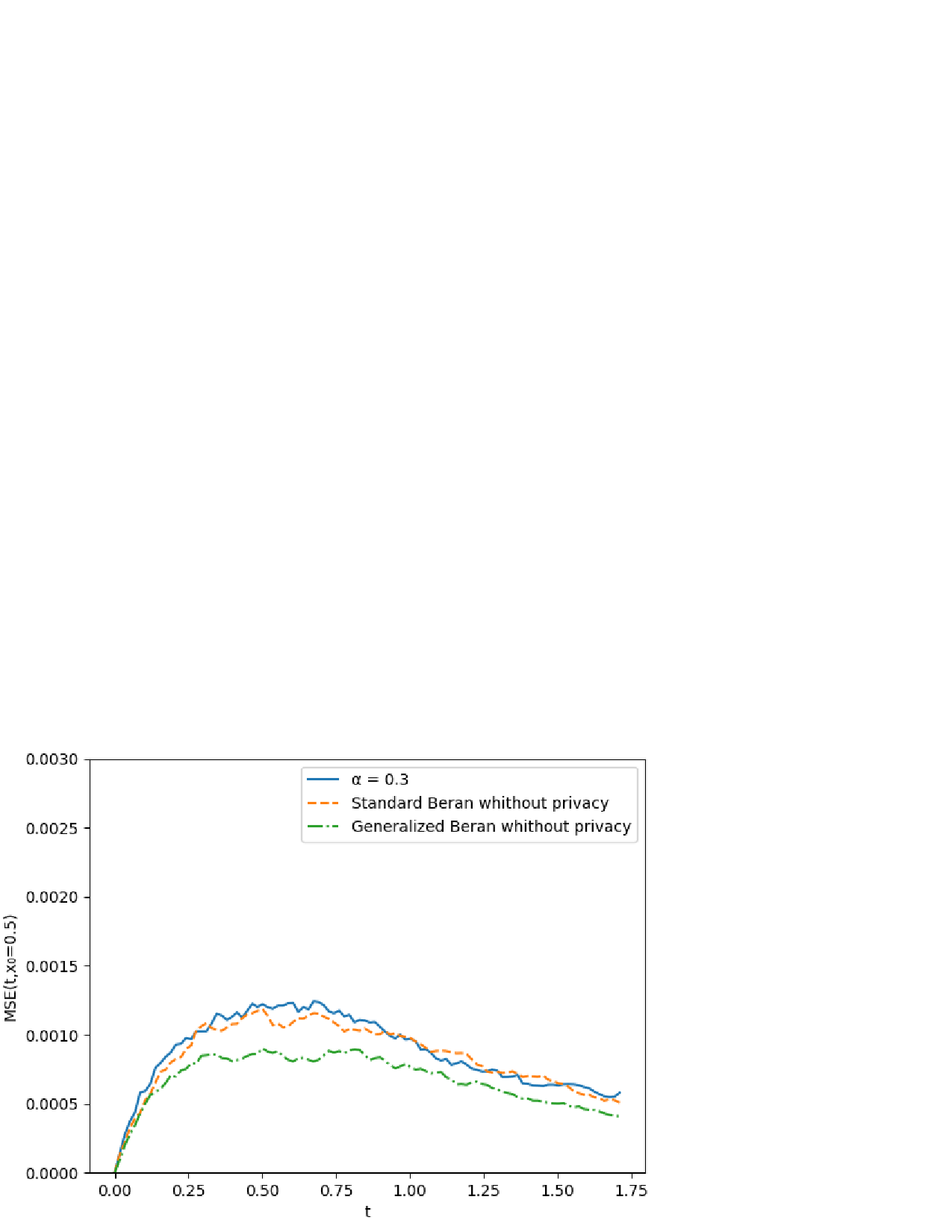}
    \includegraphics[scale=.45,trim= 2.05cm 0cm 1.5cm 0cm,clip]{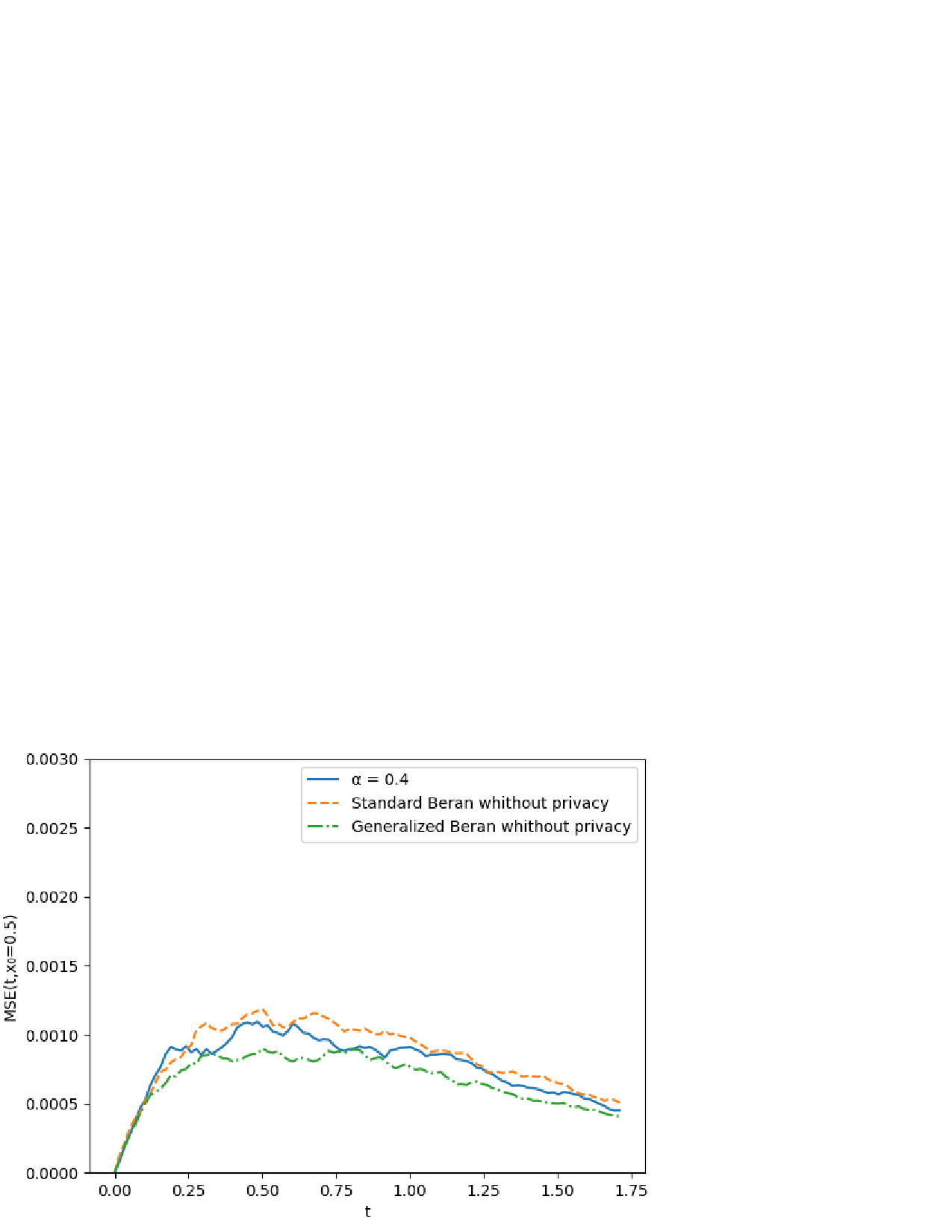}
    \caption{Graph of Mean Squared Error (MSE) as a function of time with $50 \%$ of censoring for the first line and $25\%$ for the second.}
    \label{fig:CompareMSEscenario1}
\end{figure}

\section{Proof}  \label{sec:proof}

\noindent
\textbf{Proof of Lemma \ref{lem::privacy}}: We use an equivalence between the definition in (\ref{eq::ldp}) and the channel density functions. Indeed, we have that a mechanism is $\alpha$-locally differentially private if and only if we have 
\begin{eqnarray*}
    \sup_{z\in\R}\sup_{x,x'}\dfrac{q(z|x)}{q(z|x')}\leq e^\alpha
\end{eqnarray*}
where $q$ is the channel conditional density function. In our context, we have for any $z\in\R$ and $b,b'\in \{0,1\}$
\begin{eqnarray*}
    \dfrac{q_\alpha(z|b)}{q_\alpha(z|b')}&=&\exp\left(\alpha|z-b|-|z-b'|\right)\leq\exp\left(\alpha|b-b'|\right)\leq e^\alpha
\end{eqnarray*}
and the result follows.\hfill\qed\\

\noindent
\textbf{Proof of Lemma \ref{lem::labelprivacy}}: the proof is similar to the previous one and makes use of the ratio between the kernel density functions. By definition, for any $t\geq0$, $z\in\R$ and $b\in\{0,1\}$, the conditional density of the random vector $(Y,Z,X)$ given $(Y,\delta,X)=(y',b,x')$ against the measure $\lambda(dz)\otimes\P(Y\in ., X\in .)$ is given by 
\begin{eqnarray*}
    m(y,z,x|y',b,x')=q_\alpha(z|b)\ind_{\{y=y',x=x'\}}
\end{eqnarray*}
for any $x\in\mathcal{S}_X,\,y\geq 0\text{ and }z\in\R$. This allows us to have that
\begin{eqnarray*}
    \dfrac{m(y,z,x|y,b,x)}{m(y,z,x|y,b',x)} = \dfrac{q_\alpha(z|b)}{q_\alpha(z|b')}\leq e^\alpha
\end{eqnarray*}
and the result follows. 

\subsection{Technical lemmas}
Before coming into the proof of the main results, we preface some technical lemmas useful in the derivation of deterministic upper bounds. Lemma \ref{lem:smallballHolderversion} provides concentration results for small ball probabilities with covariates satisfying the Hölder assumption $(\mathcal{H})$ and Lemma \ref{lem:BoundonEspK} proves similar results with kernel expectations. Based on the proofs in \cite{ferraty2006nonparametric} and \cite{art:ChagnyRoche_AdaptiveMinimax}, we will consider deviations given in Lemma \ref{lem:deviationRhx} for the following processes
\begin{eqnarray*}\label{eq:processRH}
    &&R_h^x = \frac{1}{n} \sum_{i=1}^n \frac{K_h(x-X_i)}{\E \left[ K_h(x-X)\right]}\quad\text{and}\quad R_{b}^{y,x} = \frac{1}{n} \sum_{i=1}^n \frac{K_b(x-X_i)\widetilde K_b(y-Y_i)}{\E \left[K_b(x-X)\widetilde K_b(y-Y)\right]}
\end{eqnarray*}
where $h,b\in(0,1]$. Finally, bias of the statistics appearing in the main result proof will be controlled with bounds on the expectations $\E \left[  K_h ( x -X)\ind_{\{Y \le t \}}\right]$ and $\E \left[  K_h ( x -X)\ind_{\{Y \le t,\delta=1 \}}\right]$ proposed in Lemma \ref{lem:errkernelestimH}.

\begin{lem}\label{lem:smallballHolderversion}
Assume $(\mathcal{H})$. Let $h\in (0,1)$, $y\in\text{int}(\mathcal{S}_Y)$ and $x\in \text{int}(\mathcal{S}_X)$. Then, there exist measurable functions $x\to\psi^x_n$ and $(x,y)\to \phi^{x,y}_n$ such that
\begin{eqnarray*}
        \P( \|x-X\|\le h)=h^p(f(x)+\psi^x_n)
\end{eqnarray*}
\begin{center}
    and
\end{center}
\begin{eqnarray*}
    \P( \|x-X\|\le b,|y-Y|\leq b) = b^{1+p}(g(y|x)f(x)+\phi^{x,y}_n)
\end{eqnarray*}
where $\sup_x|\psi^x_n|=\mathcal{O}(h^\beta)$ and $\sup_{x,y}|\phi^{x,y}_n|=\mathcal{O}(b^\eta+b^\beta)$.
\end{lem}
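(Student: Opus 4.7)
My approach is to compute both small-ball probabilities as integrals of the joint density, replace the integrand by its value at the reference point $(x,y)$, and quantify the substitution error via the Hölder regularity from $(\mathcal{H})$. For the first identity I would write
\begin{equation*}
\P(\|x-X\|\le h) = \int_{B(x,h)} f(u)\,du = V_p h^p f(x) + \int_{B(x,h)}\bigl(f(u)-f(x)\bigr)\,du,
\end{equation*}
where $V_p$ denotes the Lebesgue volume of the unit ball of $\R^p$. Assumption $(\mathcal{H}.1)$ gives $|f(u)-f(x)|\le c h^\beta$ on $B(x,h)$, so the remainder is bounded by $c V_p h^{p+\beta}$. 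Factoring out $h^p$ produces the claimed expansion (the constant $V_p$ being absorbed in the normalization), and $\sup_x|\psi^x_n|=\mathcal{O}(h^\beta)$ follows from the fact that the Hölder constant $c$ in $(\mathcal{H}.1)$ does not depend on the evaluation point.

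For the second identity I would use the joint density of $(Y,X)$ to write
\begin{equation*}
\P(\|x-X\|\le b,\,|y-Y|\le b) = \int_{B(x,b)}\int_{y-b}^{y+b} g(t|u)f(u)\,dt\,du,
\end{equation*}
and telescope the integrand as
\begin{equation*}
g(t|u)f(u)-g(y|x)f(x) = g(t|u)[f(u)-f(x)] + f(x)[g(t|u)-g(t|x)] + f(x)[g(t|x)-g(y|x)].
\end{equation*}
The first two summands are $\mathcal{O}(b^\beta)$ on $B(x,b)$ by $(\mathcal{H}.1)$--$(\mathcal{H}.2)$ combined with the uniform boundedness of $f$ and $g$. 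The third is $\mathcal{O}(b^\eta)$ on $[y-b,y+b]$, provided the density $g(\cdot|x)$ is $\eta$-Hölder in $t$, which is what I would read from $(\mathcal{H}.4)$ at the level of the conditional density of $Y$ given $X=x$. Since the product region $B(x,b)\times[y-b,y+b]$ has Lebesgue measure $2V_p b^{p+1}$, assembling the leading term and the three error contributions yields the stated expansion with $|\phi^{x,y}_n|=\mathcal{O}(b^\eta+b^\beta)$, uniformly in $(x,y)$.

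The only delicate step is the $\eta$-Hölder control of $g(\cdot|x)$ in the time variable: Hölder regularity of a CDF does not in general transfer to its density, so this is the one place where the $b^\eta$ term in $\phi^{x,y}_n$ actually originates, and it forces one to read $(\mathcal{H}.4)$ as a statement about the density $g$ rather than just the CDF. With this granted, the rest is a routine Hölder-versus-volume comparison on products of balls, and the fact that every Hölder constant in $(\mathcal{H})$ is uniform in its arguments immediately yields the uniform bounds $\sup_x|\psi^x_n|=\mathcal{O}(h^\beta)$ and $\sup_{x,y}|\phi^{x,y}_n|=\mathcal{O}(b^\eta+b^\beta)$.
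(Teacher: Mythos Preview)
Your proof is correct and follows essentially the same route as the paper: write the small-ball probability as an integral of the density, replace the integrand by its value at the center, and control the remainder via the H\"older bounds in $(\mathcal{H})$. For the joint probability the paper iterates (inner time integral first, then the space integral) whereas you telescope the product $g(t|u)f(u)-g(y|x)f(x)$ in one step, but this is purely cosmetic; your explicit caveat that the $\mathcal{O}(b^\eta)$ contribution requires $\eta$-H\"older regularity of $g(\cdot|x)$ in the time variable---rather than of the CDF as literally written in $(\mathcal{H}.4)$---is a point the paper's own proof uses without comment.
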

\begin{proof}
By a change of variable we have
\begin{equation*}
    \begin{split}
        \P( \|x-X\|\le h)   & = \int \ind_{\{\|x-y \| \le h \}}f(y) \mathrm{d}y 
        \\ & = h^p \int \ind_{\{\|u \| \le 1 \}}f(x-hu) \mathrm{d}u 
        \\ & = h^p \int_{B(0,1)} f(x-hu)-f(x) \mathrm{d}u + h^p  f(x) |B(0,1)|.
    \end{split}
\end{equation*}
But, under Assumption $(\mathcal{H})$ we have
\begin{equation*}
    -h^{p+\beta}c \int_{B(0,1)} \|u\|^\beta \mathrm{d}u \le h^p \int_{B(0,1)}  f(x-hu)-f(x) \mathrm{d}u \le h^{p+\beta}c \int_{B(0,1)} \|u\|^\beta \mathrm{d}u
\end{equation*}
we then obtain
    \begin{equation*}
        \left| \P\left( \|x-X\|\le h\right) -  h^pf(x) |B(0,1)| \right|\le h^{p+\beta}  c \int_{B(0,1)}  \|u\|^\beta \mathrm{d}u
    \end{equation*}
which leads to the first result. We conduct a similar analysis for the second expression based on the Hölder assumptions for the model functions $f$ and $g$. Specifically, by use of a change of variables
\begin{eqnarray*}
    \P( \|x-X\|\le b,\,|y-Y|\leq b)&=&\int f(u)\ind_{\{\|x-u\|\leq b\}}\int g(v|u)\ind_{\{|y-v|\leq b\}}dvdu\\
    &=:&\int f(u)\ind_{\{\|x-u\|\leq b\}}G_n(u)dvdu
\end{eqnarray*}
where we observe that for any $u\in\mathcal{S}_X$
\begin{eqnarray*}
   \left|\int g(v|u)\ind_{\{|y-v|\leq b\}}dv - bg(y|u) \right|\leq cb^{1+\eta}\int_{-1}^1|s|^\eta ds.
\end{eqnarray*}
We obtain that $G_n=b(g(y|\cdot)+\mathcal{O}(b^\eta))$ uniformly, which implies that
\begin{eqnarray*}
    \int f(u)\ind_{\{\|x-u\|\leq b\}}G_n(u)du&=&\int f(u)\ind_{\{\|x-u\|\leq b}b(g(y|u)+\mathcal{O}(b^\eta))du\\
    &=&b^{1+p}\left(\int_{B(0,1)} f(x-ub)g(y|x-ub)du+\mathcal{O}(b^{\eta})\right)\\
    &=&b^{1+p}\left(f(x)g(y|x)+\mathcal{O}(b^{\beta})+\mathcal{O}(b^{\eta})\right)
\end{eqnarray*}
uniformly in $u\in\mathcal{S}_X$., which ensures that
\begin{eqnarray*}
    \P(\|x-X\|\leq b,|y-Y|\leq b)=b^{1+p}\left(f(x)g(y|x)+\mathcal{O}(b^{\beta})+\mathcal{O}(b^{\eta})\right).
\end{eqnarray*}
which concludes the proof.
\end{proof}
\bigskip

\begin{lem}\label{lem:BoundonEspK} Assume $(\mathcal{K})$ and $(\mathcal{H})$. Let $h\in (0,1)$, $y\in\text{int}(\mathcal{S}_Y)$ and $x \in \text{int}(\mathcal{S}_X)$, then for any $l >0$
    \begin{eqnarray*}
    [c_Kh^{-p}]^l  \le&\dfrac{\E \left[K_h\left(x-X \right)^l \right]}{\P \left( \| x-X \| \le h \right)}& \le [C_K h^{-p}]^l 
\end{eqnarray*}
\begin{center}
    and
\end{center}
\begin{eqnarray*}
    \left[c_Kc_{\widetilde K}b^{-1-p}\right]^l  \le&\dfrac{\E \left[(K_b(x-X)\widetilde K_b(y-Y))^l\right]}{\P( \|x-X\|\le b,|y-Y|\leq b)}& \le \left[C_KC_{\widetilde K}b^{-1-p}\right]^l
\end{eqnarray*}
\end{lem}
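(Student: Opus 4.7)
The plan is to exploit assumption $(\mathcal{K})$ directly: each kernel is supported on the unit ball and is pointwise bounded between a strictly positive constant and a finite constant on its support. This turns the computation into a simple sandwich estimate.

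First, I would note that by the definition $K_h(u) = h^{-p} K(u/h)$ together with the support condition in $(\mathcal{K})$, the random variable $K_h(x-X)$ vanishes outside the event $\{\|x-X\| \le h\}$. On this event, the bound $c_K \le K((x-X)/h) \le C_K$ from $(\mathcal{K})$ gives the pointwise inequality
\begin{equation*}
    (c_K h^{-p})^l \,\ind_{\{\|x-X\| \le h\}} \;\le\; K_h(x-X)^l \;\le\; (C_K h^{-p})^l \,\ind_{\{\|x-X\| \le h\}}.
\end{equation*}
Taking expectations on both sides and dividing through by $\mathbb{P}(\|x-X\| \le h) = \mathbb{E}[\ind_{\{\|x-X\| \le h\}}]$ yields the first chain of inequalities.

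For the second statement, I would repeat the same argument with the product kernel $K_b(x-X)\widetilde{K}_b(y-Y) = b^{-(1+p)} K((x-X)/b)\widetilde{K}((y-Y)/b)$. Assumption $(\mathcal{K})$ gives that the support of this product, viewed as a function of $(X,Y)$, is contained in $\{\|x-X\|\le b\} \cap \{|y-Y|\le b\}$, and on this event the product is sandwiched between $c_K c_{\widetilde K} b^{-(1+p)}$ and $C_K C_{\widetilde K} b^{-(1+p)}$. Raising to the power $l$, multiplying by the corresponding indicator, taking expectations and dividing by $\mathbb{P}(\|x-X\| \le b, |y-Y| \le b)$ delivers the second chain.

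There is no real obstacle: the lemma is essentially a packaging of the uniform kernel bounds in $(\mathcal{K})$ together with the localization produced by the compact support. The only care needed is to remember that $K_h$ and $\widetilde{K}_b$ are normalized by the bandwidth factors $h^{-p}$ and $b^{-1}$ before applying the constants $c_K, C_K, c_{\widetilde K}, C_{\widetilde K}$, which is what produces the $h^{-p}$ and $b^{-(1+p)}$ prefactors in the final bounds.
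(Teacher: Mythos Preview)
Your proposal is correct and follows essentially the same approach as the paper: use the compact support of $K$ to insert the indicator $\ind_{\{\|x-X\|\le h\}}$, apply the pointwise bounds $c_K\le K\le C_K$ on that event, and account for the $h^{-p}$ normalization before taking expectations. The paper writes this slightly more tersely (expanding $K_h^l = h^{-pl}K(\cdot/h)^l$ first and then invoking boundedness), but the argument is identical.
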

\begin{proof}
    By definition of $K_h$
        \begin{equation*}
            \E \left[K_h\left(x-X \right)^l \right]  = h^{-pl} \E\left[ K\left(\frac{x-X}{h} \right)^l \right]. 
        \end{equation*}
    The function $K$ is assumed compactly supported on the unit ball, which gives us
        \begin{equation*}
            \E \left[K_h\left(x-X \right)^l \right]  = h^{-pl} \E\left[ \ind_{\{\|x-X\|\le h\}}K\left(\frac{x-X}{h} \right)^l \right].
        \end{equation*}
    and the first assertion follows since $K$ is bounded. Similar arguments allow to show the second and third expressions.
\end{proof}

\bigskip
\begin{lem}\label{lem:deviationRhx}Assume $(\mathcal{K})$ and $(\mathcal{H})$. Let $h\in (0,1)$ $y\in\text{int}(\mathcal{S}_Y)$ and $x \in \text{int}(\mathcal{S}_X)$, then the following inequalities holds
    \begin{equation*}
         \P\left(R^x_h \le 1/2 \right) \le  2 \exp\left(-c_1nh^p(f(x)+\psi^x_n)\right)\quad\text{and}\quad \P\left(R^{y,x}_b \le 1/2 \right) \le  2 \exp\left(- c_2 nb^{1+p}(f(x)g(y|x)+\phi^{x,y}_n)\right)
    \end{equation*}
where $\psi_n^x$ and $\phi_n^{x,y}$ are defined in Lemma \ref{lem:smallballHolderversion} and
\begin{eqnarray*}
   c_1:=\left[8\left(\frac{C_K^2}{c_K^2}+\frac{C_K}{2c_K}\right)\right]^{-1} \quad\text{and}\quad c_2:=\left[8\left(\frac{C_K^2C^2_{\widetilde K}}{c_K^2c^2_{\widetilde K}}+\frac{C_KC_{\widetilde K}}{2c_Kc_{\widetilde K}}\right)\right]^{-1}.
\end{eqnarray*}
\end{lem}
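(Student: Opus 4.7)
The plan is to apply a Bernstein-type concentration inequality to the i.i.d.\ centered sum $R_h^x - 1 = \frac{1}{n}\sum_{i=1}^n Y_i$, where $Y_i := K_h(x-X_i)/\E[K_h(x-X)] - 1$. Since $\{R_h^x \le 1/2\} \subseteq \{|R_h^x - 1| \ge 1/2\}$, a two-sided Bernstein bound with deviation $t = 1/2$ suffices, and naturally produces the prefactor $2$ that appears in the statement.

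First, I would extract the two inputs needed by Bernstein's inequality from the preceding lemmas. By $(\mathcal{K})$, $K$ is supported on the unit ball with $c_K \le K \le C_K$ there, so $K_h(x-X) \le C_K h^{-p}$ almost surely. Combined with Lemma \ref{lem:BoundonEspK} at $l=1$, which gives $\E[K_h(x-X)] \ge c_K h^{-p} P_h$ with $P_h := \P(\|x-X\|\le h)$, this yields the uniform bound $|Y_i| \le M := C_K/(c_K P_h)$. Likewise, Lemma \ref{lem:BoundonEspK} at $l=2$ gives $\E[K_h(x-X)^2] \le C_K^2 h^{-2p} P_h$, hence the variance bound $\E[Y_i^2] \le v := C_K^2/(c_K^2 P_h)$.

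Next, I would apply Bernstein's inequality in the (slightly weaker but convenient) form $\P(|\sum_i Y_i| \ge nt) \le 2\exp(-nt^2/(2v + 2Mt))$ at $t = 1/2$. A short arithmetic check gives $2v + M = 2 P_h^{-1}\bigl(C_K^2/c_K^2 + C_K/(2c_K)\bigr)$, so the exponent reduces to $-c_1 n P_h$ with exactly the constant $c_1$ stated in the lemma. The first inequality then follows by substituting the small-ball expression $P_h = h^p(f(x) + \psi_n^x)$ from Lemma \ref{lem:smallballHolderversion}.

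The second bound is proved identically, with the product kernel $K_b(x-X_i)\widetilde K_b(y-Y_i)$ replacing $K_h(x-X_i)$: the sup-norm and $l=2$ bounds in Lemma \ref{lem:BoundonEspK} produce the same formulas with $c_K c_{\widetilde K}$ and $C_K C_{\widetilde K}$ in place of $c_K$ and $C_K$, and the relevant small-ball estimate becomes $b^{1+p}(f(x)g(y|x) + \phi_n^{x,y})$. The only delicate point is aligning the version of Bernstein's inequality used with the explicit constants $c_1$, $c_2$ of the lemma — in particular, keeping $2Mt$ (rather than the tighter $\tfrac{2}{3}Mt$) in the denominator, which is precisely what produces the factor $\tfrac{1}{2}$ rather than $\tfrac{1}{6}$ multiplying the $C_K/c_K$ term. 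This is the only mildly technical aspect of the argument; no further obstacle arises.
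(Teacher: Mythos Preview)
Your proposal is correct and follows essentially the same route as the paper: the paper also uses the inclusion $\{R_h^x\le 1/2\}\subseteq\{|R_h^x-1|\ge 1/2\}$, applies a Bernstein inequality (in the Birg\'e--Massart moment form, with $v^2=C_K^2/(c_K^2 P_h)$ and $b=C_K/(c_K P_h)$) at level $\eta=1/2$, and then substitutes the small-ball expansion of $P_h$ from Lemma~\ref{lem:smallballHolderversion}. Your bounded-plus-variance formulation is equivalent here and produces the identical exponent and constant $c_1$; the second inequality is handled the same way in both arguments.
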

\begin{proof}
     Based on the event inclusion of $\{R^x_h \le 1/2 \}$ into $\{|R^x_h - 1 | \ge 1/2 \}$, we derive an upper bound for $\P\left(|R^x_h - 1 | \ge 1/2  \right) $. This deviation term will be controlled thanks to Bernstein's inequality (see \cite[Lemma 8]{art:birgeMassart_minimumconstrastestimator}) which we state as followed. Let $V_1, \cdots, V_n$ be independent random variables and $S_n(V) = \sum_{i=1}^n (V_i - \E [V_i])$. Then for any $b$ and $v$ be positive constants with
     \begin{equation*}
         \forall l \ge 2, \quad \frac{1}{n}\sum_{i=1}^n \E [|V_i|^l] \le \frac{l!}{2}v^2 b^{l-2}
     \end{equation*}
     we have for $\eta>0$
     \begin{eqnarray}
     \label{eq:bernstein}
         \P \left(\frac{1}{n}|S_n(V)| \ge \eta \right) \le 2 \exp\left(-\frac{n\eta^2}{2(v^2+b\eta)}\right).
     \end{eqnarray}
     We use this lemma in our context by considering the random variables 
     \begin{eqnarray*}
        V_i:=\frac{K_h(x-X_i)}{\E[K_h(x-X)]},\quad i=1,\cdots,n 
     \end{eqnarray*}
     with $S_n(V)/n=R_h^x -1$. By Lemma \ref{lem:BoundonEspK}, we have for $l\ge2$
    \begin{equation*}
         \E \left[|V_i|^l \right]= \frac{ \E \left[K_h^l ( x - X)\right]}{\E \left[K_h(x-X)\right]^l}\le \frac{C_K^l}{c_K^l \P( \|x-X\|\le h)^{l-1}},
     \end{equation*}
     which implies that
    \begin{equation*}
         \frac{1}{n}\sum_{i=1}^n \E [|V_i|^l] \le  \underbrace{\frac{C_K^2}{c_K^2 \P( \|x-X\|\le h)}}_{\eqqcolon v^2}  \underbrace{\left( \frac{C_K}{c_K \P( \|x-X\|\le h)}\right)^{l-2}}_{\eqqcolon b^{l-2}},
     \end{equation*}
     The first results finally follows from (\ref{eq:bernstein}) with $\eta=1/2$ and Lemma \ref{lem:smallballHolderversion}. The same arguments can be applied to the random variables
     \begin{eqnarray*}
        \frac{K_b(x-X_i)\widetilde K_b(y-Y_i)}{\E[K_b(x-X)\widetilde K_b(y-Y)]},\quad i=1,\cdots,n
     \end{eqnarray*}
     so that we obtain similar concentration results for the process $R_b^{x,y}$ which concludes the proof of the lemma.
\end{proof}
\bigskip

\begin{lem}\label{lem:errkernelestimH} Assume $(\mathcal{K})$ and $(\mathcal{H})$ . Then for any $h \in (0,1)$,
    \begin{equation*}\label{eq:forH}
        \E \left[  K_h ( x -X)\ind_{\{Y \le t \}}\right] -  H(t|x) \E \left[K_h(x-X)\right] \le  h^{\beta} C_{K,X,\beta},
    \end{equation*}
    and
    \begin{equation*}\label{eq:forHu}
        \E \left[  K_h ( x -X)\ind_{\{Y \le t, \delta=1 \}}\right] -  H^u(t|x) \E \left[K_h(x-X)\right] \le  h^{\beta} C_{K,X,\beta},
    \end{equation*}
    where $C_{K,X,\beta}$ is given in \eqref{eq:constantforHandHu}.
\end{lem}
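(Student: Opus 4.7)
The plan is to exploit the Hölder regularity of $H$ and $H^u$ in the spatial argument, combined with the compact support of the kernel $K$. For the first inequality, I would first condition on $X$ and use the tower property to rewrite
\begin{equation*}
    \E\left[K_h(x-X)\ind_{\{Y\le t\}}\right] = \E\left[K_h(x-X)\,H(t|X)\right],
\end{equation*}
and observe that $H(t|x)\,\E[K_h(x-X)]=\E[K_h(x-X)\,H(t|x)]$, so the quantity to bound rewrites as $\E\left[K_h(x-X)\bigl(H(t|X)-H(t|x)\bigr)\right]$.

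Next I would apply assumption $(\mathcal{H}.3)$ to get the pointwise bound $|H(t|X)-H(t|x)|\le c\|X-x\|^\beta$, yielding an upper bound of the form $c\,\E[K_h(x-X)\|X-x\|^\beta]$. The key step is then the standard change of variables $u=(x-X)/h$, which produces a factor $h^\beta$ from the identity $\|X-x\|^\beta=h^\beta\|u\|^\beta$, while the $h^{-p}$ in $K_h$ cancels against the $h^p$ Jacobian. This leaves, using the uniform boundedness of $f$ and the fact that $K$ is supported in the unit ball of $\R^p$,
\begin{equation*}
    c\,h^\beta \int_{B(0,1)} K(u)\|u\|^\beta f(x-hu)\,du \;\le\; h^\beta\, c \sup_{y\in\mathcal{S}_X} f(y) \int_{B(0,1)} K(u)\|u\|^\beta\,du,
\end{equation*}
from which the constant $C_{K,X,\beta}$ is identified.

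The second inequality follows by exactly the same reasoning, conditioning on $X$ to obtain
\begin{equation*}
    \E\left[K_h(x-X)\ind_{\{Y\le t,\delta=1\}}\right] = \E\left[K_h(x-X)\,H^u(t|X)\right],
\end{equation*}
and invoking assumption $(\mathcal{H}.5)$ with the time argument held fixed, which yields $|H^u(t|X)-H^u(t|x)|\le c\|X-x\|^\beta$. The rest of the argument is identical, producing the same constant $C_{K,X,\beta}$.

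There is essentially no obstacle here: the lemma reduces via conditioning to a one-line kernel computation. The only care needed is to identify the correct regularities to invoke, namely the spatial $\beta$-Hölder continuity of $H$ (from $(\mathcal{H}.3)$) and of $H^u$ uniformly in $t$ (from the $\|x-y\|^\beta$ part of $(\mathcal{H}.5)$), and to use the compact support of $K$ to ensure that $\int_{B(0,1)} K(u)\|u\|^\beta\,du$ is finite so that $C_{K,X,\beta}$ is well-defined.
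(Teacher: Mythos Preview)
Your proposal is correct and follows essentially the same route as the paper: condition on $X$, apply the spatial H\"older bound on $H$ (resp.\ $H^u$), and change variables to extract the factor $h^\beta$. The only cosmetic difference is in the last step, where you bound $f(x-hu)\le\|f\|_\infty$ directly while the paper instead applies $(\mathcal{H}.1)$ once more to write $f(x-hu)=f(x)+O(h^\beta)$, yielding the specific constant $C_{K,X,\beta}=h^\beta c^2\int K(y)\|y\|^{2\beta}\,dy + c\,f(x)\int K(y)\|y\|^\beta\,dy$ referenced in the statement; your simpler bound gives a different but equally valid constant.
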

\begin{proof}
    The proof for $H^u$ and $H$ are exactly the same and we thus only give the proof for the latter. Since $X$ has a density $f$, we have 
    \begin{equation*}
        \begin{split}
            \E \left[  K_h ( x -X)\ind_{\{Y \le t \}}\right] &= \int_{\R^p} K_h(x-y) \E \left[ \ind_{\{Y \le t \}} |X=y \right]f(y)\mathrm{d}y
            \\ & = \int_{\R^p} K_h(x-y) H(t|y) f(y)\mathrm{d}y.
        \end{split}
    \end{equation*}
    By definition of $K_h$ and a change of variable, we have
    \begin{equation*}
        \begin{split}
            \E \left[  K_h ( x -X)\ind_{\{Y \le t \}}\right] & = \int_{\R^p}h^{-p} K\left(\frac{x-y}{h}\right)H(t|y)  f(y)\mathrm{d}y
            \\ & = \int_{\R^p} K(y)H(t|x-hy)  f(x-hy)\mathrm{d}y,
        \end{split}
    \end{equation*}
    and since $H$ is in the Hölder class, we have 
        \begin{equation*}
        \begin{split}
            \E \left[  K_h ( x -X)\ind_{\{Y \le t \}}\right] &= \int_{\R^p} K(y)H(t|x-hy)  f(x-hy)\mathrm{d}y -  H(t|x) \E \left[K_h(x-X)\right]+  H(t|x) \E \left[K_h(x-X)\right]
            \\ & \le \int_{\R^p} K(y)\left|H(t|x-hy)-H(t|x)\right| f(x-hy)\mathrm{d}y +  H^u(t|x) \E \left[K_h(x-X)\right]
            \\ & \le h^{\beta} c \int_{\R^p} K(y)\left\|y\right\|^\beta f(x-hy)\mathrm{d}y +  H(t|x) \E \left[K_h(x-X)\right],
        \end{split}
    \end{equation*}
    Finally, with the Hölder property of $f$, we have 
    \begin{equation} \label{eq:constantforHandHu}
        \begin{split}
            \E \left[  K_h ( x -X)\ind_{\{Y \le t \}}\right] & -  H(t|x) \E \left[K_h(x-X)\right] 
            \\ & \le h^{\beta} c \int_{\R^p} K(y)\left\|y\right\|^\beta f(x-hy)-f(x)\mathrm{d}y + h^{\beta} c \int_{\R^p} K(y)\left\|y\right\|^\beta f(x)\mathrm{d}y 
            \\ & \le h^{2\beta} c^2 \int_{\R^p} K(y)\left\|y\right\|^{2\beta} \mathrm{d}y + h^{\beta} c \int_{\R^p} K(y)\left\|y\right\|^\beta f(x)\mathrm{d}y 
            \\ & \le h^{\beta} \underbrace{\left(h^{\beta}  c^2 \int_{\R^p} K(y)\left\|y\right\|^{2\beta} \mathrm{d}y +  c \int_{\R^p} K(y)\left\|y\right\|^\beta f(x)\mathrm{d}y \right)}_{C_{K,X,\beta}},
        \end{split}
    \end{equation}
    which concludes the proof.
\end{proof}

\subsection{Proof of the Theorem \ref{Theo:Upperbound}}
Let $p(y,x):=\mathbb{P}(\delta=1|Y=y,X=x)$ defines the conditional success probability of $\delta$ given the survival outcomes. We consider the generalized Nelson-Aalen estimator
\begin{eqnarray*}
    \Lambda_n:=\int_0^t\dfrac{dH^u_n(s|x)}{H_n(s|x)}
\end{eqnarray*}
where $H^u_n$ denotes the kernel type estimator of $H^u$ based on the indicators $\{p(Y_i,X_i)\}_{1\leq i\leq n}$ and given by
\begin{eqnarray*}
    H_n^u(t|x):=W_h(x-X_i)\ind_{\{Y_i\leq t\}}p(Y_i,X_i).
\end{eqnarray*}
Our proof is based on the decomposition $\widehat\Lambda_n-\Lambda=\widehat\Lambda_n-\Lambda_n+\Lambda_n-\Lambda$ where we will separately consider the privatized and cleared terms. The aim of this strategy is to simplify the analysis by isolating the terms dependent from the privacy, where we see that only $\widehat p_n$ will require adapted computations. In the first part, results in \cite{art:GineGuillou2002Rates} based on Talagrand's inequality and Vapnik–Chervonenkis (VC) classes will be used to derive concentration results for $\Lambda_n-\Lambda$, and in the second part, strategies similar to the proofs in \cite{art:ChagnyRoche_AdaptiveMinimax} will be adapated.

\subsubsection{Risk analysis for $\Lambda_n-\Lambda$}
Application of Proposition 2.1 in \cite{Esc23-3} ensures that under $(\mathcal{H})$ and $(\mathcal{K})$, we have
\begin{eqnarray*}
    \Lambda_n(t|x) - \Lambda(t|x) = \sum_{i=1}^n W_h (x - X_i) \ell (t,Y_i,\delta_i,Z_i|x )) + r_n(t|x),
\end{eqnarray*}
where 
\begin{equation*}
    \begin{split}
         \ell (\cdot,Y_i,\delta_i,Z_i|x) & = \underbrace{\frac{ \ind_{ \{Y_i \le t, \delta_i=1 \}}- H^u (t|x) }{1- H(t|x)}}_{A_{1,i}(t|x)}- \underbrace{\int_0^t \frac{ \ind_{ \{Y_i \le s, \delta_i=1 \}}- H^u (s|x) }{(1- H(s|x))^2} dH(s|x)}_{A_{2,i}(t|x)} \\
         & + \underbrace{\int_0^t \frac{ \ind_{ \{Y_i \le s \}}- H (s|x) }{(1- H(s|x))^2} dH^u(s|x)}_{A_{3,i}(t|x)} + \underbrace{\frac{\left(p(Y_i,X_i)-\delta_i\right)\ind_{\{ Y_i \le t \}}}{1- H(t|x)}}_{A_{4,i}(t|x)}
    \end{split}
\end{equation*}
and direct algebra with Fubini's theorem and Cauchy-Schwarz inequality yields 
\begin{eqnarray}
\label{eq:almostsur}
    \E \left[ \left\| \widehat\Lambda_n(t|x) - \Lambda_T(t|x) \right\|_{[t_0,t_1]}^2  \right]&=& \int_a^b \E \left[ \left( \widehat\Lambda_n(t|x) - \Lambda_T(t|x) \right)^2 \right] \mathrm{d}t\\
\nonumber &\le& 5 \sum_{j=1}^4   \int_a^b    \E \left[ \left(  \sum_{i=1}^n W_h (x - X_i)A_{j,i}(t|x) \right)^2 \right] \mathrm{d}t +  5 \int_a^b \E \left[ r_n(t|x)^2 \right] \mathrm{d}t\\
\nonumber &=:& \sum_{j=1}^4 B_j+5 \int_a^b \E \left[ r_n(t|x)^2 \right] \mathrm{d}t.
\end{eqnarray}
We bound the risk of $\Lambda_n$ by controlling the $B_j$'s and the remainder term separately. Derivation of risk bounds for the $B_j$'s follow the same arguments since they are based on the bias-variance decomposition for i.i.d. random variables.\\
\bigskip

\noindent
\textbf{Part 1}: we consider the deviation results of the $R_h^x$ process defined in \eqref{eq:processRH} in order to work with weights independently distributed. Recall that $B_1$ is given by  

\begin{equation}\label{eq:A1}
    \E \left[ \left(  \sum_{i=1}^n W_h (x - X_i)A_{1,i}(t|x) \right)^2 \right]  = \frac{1}{(1- H(t|x))^2} \E \left[\left( \sum_{i=1}^n  W_h ( x -X_i)\ind_{\{Y_i \le t, \delta_i=1 \}} - H^u(t|x) \right)^2 \right].
\end{equation}
The expectation on the right hand side is in turn divided into a sum of two terms according to the value of the process $R_h^x$, the first one is 
\begin{equation*}
    \E \left[\left( \sum_{i=1}^n  W_h ( x -X_i)\ind_{\{Y_i \le t, \delta_i=1 \}} - H^u(t|x) \right)^2 \ind_{\{R_h^x \le 1/2 \}}\right] \le 4 \P\left(R^x_h \le 1/2 \right)
\end{equation*}
where we have used that $\left|\sum_{i=1}^n  W_h ( x -X_i)\ind_{\{Y_i \le t, \delta_i=1 \}}-H^u(t|x)\right| \le 2$ a.s. and by Lemma \ref{lem:deviationRhx}, we deduce that
\begin{equation}\label{eq:deviatA1}
    \begin{split}
        \E \left[\left( \sum_{i=1}^n  W_h ( x -X_i)\ind_{\{Y_i \le t, \delta_i=1 \}} - H^u(t|x) \right)^2 \ind_{\{R_h^x \le 1/2 \}}\right] & \le 8 \exp\left(-\frac{nh^p(f(x)+\psi_n^x)}{8\left(\frac{C_k^2}{c_k^2}+\frac{C_k}{2c_k}\right)}\right)
        \\ & \le \frac{1}{n h^p} \frac{64 e^{-1}\left(\frac{C_k^2}{c_k^2}+\frac{C_k}{2c_k}\right)}{c_X}
    \end{split}
\end{equation}
where $c_X>0$ is a positive constant such that $c_X<f(x)+\psi_n^x$ for $n$ large enough. For the second term, the definition of $R_h^x$ yields
\begin{equation*}
    \begin{split}
        \E  & \left[\left( \sum_{i=1}^n  W_h ( x -X_i)\ind_{\{Y_i \le t, \delta_i=1 \}} - H^u(t|x) \right)^2 \ind_{\{R_h^x \ge 1/2 \}}\right] 
        \\ &  = \E \left[\left( \frac{ \ind_{\{R_h^x \ge 1/2 \}}}{\E \left[K_h(x-X)\right] R_h^x}\left(\frac{1}{n} \sum_{i=1}^n  K_h ( x -X_i)\ind_{\{Y_i \le t, \delta_i=1 \}} -  H^u(t|x) R_h^x \E \left[K_h(x-X)\right] \right)\right)^2\right]
        \\ &  \le \E \left[\left( \frac{2}{\E \left[K_h(x-X)\right] }\left(\frac{1}{n} \sum_{i=1}^n  K_h ( x -X_i)\ind_{\{Y_i \le t, \delta_i=1 \}} -  H^u(t|x) R_h^x \E \left[K_h(x-X)\right] \right)\right)^2\right].
    \end{split}
\end{equation*}
where the bias–variance decomposition of the last term implies that we have to give upper bounds for
    \begin{eqnarray*}
        B_{t,x}^2&:=&\E \left[\frac{2}{\E \left[K_h(x-X)\right] }\left(\frac{1}{n} \sum_{i=1}^n  K_h ( x -X_i)\ind_{\{Y_i \le t, \delta_i=1 \}} -  H^u(t|x) R_h^x \E \left[K_h(x-X)\right] \right)\right]^2\\ 
    \end{eqnarray*}
and
\begin{eqnarray*}
    V_{t,x}&:=&\mathrm{Var}\left(\frac{2}{n \E \left[K_h(x-X)\right] }\sum_{i=1}^n  K_h ( x -X_i)\ind_{\{Y_i \le t, \delta_i=1 \}} \right). 
\end{eqnarray*}      
Since the random variables $\left\{(Y_i,\delta_i,X_i)\right\}_{1\le i\le n}$  are i.i.d., Lemma \ref{lem:errkernelestimH} gives a bound of the bias with
\begin{equation}\label{eq:biaisA1}
    \begin{split}
        B_{t,x} & = \frac{2}{\E \left[K_h(x-X)\right] }\E \left[\frac{1}{n} \sum_{i=1}^n  K_h ( x -X_i)\ind_{\{Y_i \le t, \delta_i=1 \}}\right] -  H^u(t|x)\E \left[K_h(x-X)\right]
        \\ &  = \frac{2}{\E \left[K_h(x-X)\right] }\left(\E \left[K_h ( x -X)\ind_{\{Y \le t, \delta=1 \}}\right] -  H^u(t|x)\E \left[K_h(x-X)\right] \right)
        \\ & \le \frac{ 2 h^{\beta} C_{K,X,\beta} }{ \E \left[K_h(x-X)\right]}
        \\ & \le h^\beta \frac{2C_{K,X,\beta}}{c_K c_X} 
    \end{split}
\end{equation}
where Lemma \ref{lem:BoundonEspK} and Lemma \ref{lem:smallballHolderversion} have been used in the last line. For the variance, we also consider the i.i.d. assumption and obtain
\begin{equation*}
        V_{t,x}  = \frac{\mathrm{Var}\left(K_h ( x -X)\ind_{\{Y \le t, \delta=1 \}} \right)}{n\E \left[K_h(x-X)\right]^2} \le \frac{ \E \left[K_h^2 ( x - X)\right]}{n\E \left[K_h(x-X)\right]^2}
\end{equation*}
where Lemma \ref{lem:BoundonEspK} and Lemma \ref{lem:smallballHolderversion} yields
    \begin{equation}\label{eq:varianceA1}
            V_{t,x}    \le \frac{C_K^2}{n c_K^2 \P(\|x-X\|\le h ) } \le \frac{C_K^2}{n h^p c_K^2 c_X  }.
    \end{equation}
Coming back to \eqref{eq:A1} and plugging \eqref{eq:deviatA1}, \eqref{eq:biaisA1} and, \eqref{eq:varianceA1} we have
\begin{equation}\label{eq:FinalBoundforA1}
    \begin{split}
    \E \left[ \left(  \sum_{i=1}^n W_h (x - X_i)A_{1,i}(t|x) \right)^2 \right] & \le  h^{2\beta} \frac{4C_{K,X,\beta}^2C_H}{c_K^2 c_X^2} +\frac{C_H}{n h^p}\left( \frac{64 e^{-1}\left(\frac{C_k^2}{c_k^2}+\frac{C_k}{2c_k}\right)}{ c_X  }+\frac{C_K^2}{c_K^2 c_X  }\right) 
    \\ & = O\left( h^{2\beta} +\frac{1}{n h^p} \right)
    \end{split}
\end{equation}
where $C_H$ is a constant such that for all $t \in [t_0,t_1]$, $\frac{1}{1-H(t|x)}\le C_H$. The proof arguments for the remaining terms are similar. By Jensen's inequality and Fubini's theorem, we have
\begin{equation*}
    \begin{split}
        \E & \left[ \left(  \sum_{i=1}^n W_h (x - X_i)A_{3,i}(t|x) \right)^2 \right] 
        \\ & = \E \left[ \left(  \sum_{i=1}^n W_h (x - X_i)\int_0^t \frac{\ind_{\{Y_i \le s  \}}-H(s|x)}{(1-H(s|x))^2}\mathrm{d}H(s|x) \right)^2 \right] 
        \\ & \le \E \left[\int_0^t  \left(  \sum_{i=1}^n W_h (x - X_i)\frac{\ind_{\{Y_i \le s  \}}-H(s|x)}{(1-H(s|x))^2}\right)^2 \mathrm{d}H(s|x)  \right] 
        \\ & \le \int_0^t \frac{1}{{(1-H(s|x))^4}} \E \left[  \left(  \sum_{i=1}^n W_h (x - X_i)\ind_{\{Y_i \le s  \}}-H(s|x)\right)^2  \right] \mathrm{d}H(s|x) .
    \end{split}
\end{equation*}
where we recognize similar terms to those in \eqref{eq:A1}. Then, as in \eqref{eq:FinalBoundforA1}, one can find a constant $C$ such that for $j=2,3,4$
\begin{equation}\label{eq:FinalBoundforA2A3}
    \E \left[ \left(  \sum_{i=1}^n W_h (x - X_i)A_{j,i}(t|x) \right)^2 \right] \le  C \left( h^{2\beta} +\frac{1}{n h^p} \right).
\end{equation}
\bigskip

\noindent
\textbf{Part 2}: recall that the reminder term is given by
\begin{equation}\label{eq:reste}
    \begin{split}
        \E \left[ r_n(t|x)^2 \right] & = \E \left[ \left( \int_0^t \frac{1}{1-H_n}-\frac{1}{1-H} \mathrm{d}\left(H_n^u-H^u \right)\right)^2\right]
        \\ & \le \E \left[ \left( \sup_{s\in [0,t]}\left|\frac{1}{1-H_n(s|x)}-\frac{1}{1-H(s|x)} \right| \int_0^t  \mathrm{d}\left( H_n^u + H^u \right)\right)^2 \right]
        \\ & \le 4 C_H^2\E \left[ \left( \sup_{s\in [0,t]}\dfrac{\left|H_n(s|x)-H(s|x) \right|}{1-H_n(s|x)} \right)^2 \right].   
        \end{split}
\end{equation}
Note here that we will not consider the discrepancies between $H_n^u$ and $H^u$ since we only need upper bounds of the same rates as the $B_j$'s and we refer to \cite{art:lo1986product,art:VanKeilegom1997Estim,Esc23-3} for more precise results about rates of convergence of $r_n$. Due to the sup-norm, we need some additional tools and consider results based on \cite{art:GineGuillou2002Rates} and firstly focus on deviations for the empirical conditional distribution function. For all $x\in\mathcal{S}_X$, we define the class of functions
\begin{equation*}
    \mathcal{G}\coloneqq \left\{ g:(u,v)\in\mathbb{R}_+\times\mathcal{S}_X \mapsto K\left(\frac{x-v}{h}\right)\ind_{\{u \le t\}};\,x\in\mathcal{S}_X,\, t>0,\,h\in(0,1)  \right\}
\end{equation*}
These functions form a uniformly bounded VC class satisfying the condition of Theorem 2.1 in \cite{art:GineGuillou2002Rates} (see the proof of Lemma 3.1 in \cite{art:EscoGoegebeurGuillouLocal} for more details) ensuring that for $\sigma^2 \ge \sup_{g\in \mathcal{F}} \mathrm{Var(g)}$, $U\geq\|g\|_\infty$ and $0<\sigma\leq U$, there exist universal constants $A$, $C$ and $L$, depending only on the VC characteristics of the class $\mathcal{G}$, such that
\begin{equation*}
    \P \left( \sup_{g\in\mathcal{G}} \left|\sum_{i=1}^n  g\left(X_i,Y_i\right) - \E \left[g\left(X,Y \right)\right]\right| > r  \right) \le L \exp\left(\frac{-r}{LU} \log \left(1 + \left(\frac{rU}{L \left(\sqrt{n}\sigma + U \sqrt{ \log\left(\frac{AU}{\sigma}\right)} \right)^2} \right)\right) \right).
\end{equation*}
whenever
\begin{eqnarray*}
    r\geq C\left[U\log\left(\dfrac{AU}{\sigma}\right)+\sqrt{n}\sigma\sqrt{\log\left(\dfrac{AU}{\sigma}\right)}\right].
\end{eqnarray*}
Here $\mathrm{Var} ( K\left(\frac{x-X}{h}\right)\ind_{\{Y\le s\}}) \leq h^p \|K\|_\infty^2$ and the choices $\sigma^2 = h^p \|K\|_{\infty}^2$ and $U= \|K\|_{\infty}$ satisfy the condition of Theorem 2.1 in \cite{art:GineGuillou2002Rates} yielding 
\begin{equation*}
    \begin{split}
        \P & \left( \sup_{g\in\mathcal{G}} \left|\sum_{i=1}^n  g\left(X_i,Y_i\right) - \E \left[g\left(X,Y \right)\right] > r  \right|\right) 
        \\ & \le L \exp\left(\frac{-r}{L \|K\|_{\infty}} \log \left(1 + \left(\frac{r \|K\|_{\infty}}{L \left(\sqrt{n h^p \|K\|_{2}^2 \|f\|_{\infty} }+ \|K\|_{\infty} \sqrt{ \log\left(\frac{A \|K\|_{\infty}}{h^p \|K\|_{2}^2 \|f\|_{\infty}}\right)} \right)^2} \right)\right) \right).
    \end{split}
\end{equation*}
One can check that for $h$ small enough, there exists a universal constant $C_{A,L}$ depending on $A$, $L$, $K$ and $f$ such that
\begin{equation*}
    \begin{split}
        \P & \left( \sup_{g\in\mathcal{G}}\left| \sum_{i=1}^n  g\left(X_i,Y_i\right) - \E \left[g\left(X,Y \right)\right]\right| > r  \right)  \le L \exp\left(-r C_{A,L}  \log \left(1 + \frac{ r C_{A,L} }{ n h^p } \right) \right).
    \end{split}
\end{equation*}
Using that for $x>0$, $\log(1+x)>x-x^2/2$, we obtain that
\begin{equation*}
    \begin{split}
        \P & \left( \sup_{g\in\mathcal{G}}\left| \sum_{i=1}^n  g\left(X_i,Y_i\right) - \E \left[g\left(X,Y \right)\right]\right| > r  \right)  \le L \exp\left(- rC_{A,L}  \left( \frac{r C_{A,L}}{n h^p }- \frac{r^2 C_{A,L}^2}{2 n^2 h^{2p} }  \right) \right),
    \end{split}
\end{equation*}
and by choosing $r=\frac{n h^p }{C_{A,L}}$, we have 
\begin{equation}
\label{eq:deviation}
    \begin{split}
        \P & \left( \sup_{g\in\mathcal{G}}\left| \sum_{i=1}^n  g\left(X_i,Y_i\right) - \E \left[g\left(X,Y \right)\right] > r  \right|\right)  \le L e^{- n h^{p}} e^{1/2} \le \frac{L e^{1/2}}{n h^p}.
    \end{split}
\end{equation}
In particular, the assumptions ($\mathcal{H}$) ensures that 
\begin{eqnarray*}
    \sup_{t\geq 0,x\in\mathcal{S}_X}\left|\mathbb{E}[K_h(x-X)\ind_{\{Y\leq t\}}]-H(t|x)f(x)\right|=\mathcal{O}(h^\beta)
\end{eqnarray*}
yielding $H_n \underset{n\to +\infty}{\longrightarrow} H$ uniformly in $t$ and $x$ a.s. This allows to reduce the inequality in (\ref{eq:reste}) to
\begin{eqnarray*}
    \E \left[ r_n(t|x)^2 \right] \le 4 C_H^4\E \left[ \left( \sup_{s\in [0,t]}\left|H_n(s|x)-H(s|x) \right| \right)^2 \right]
\end{eqnarray*}
for $n$ large enough. We thus focus on the control the right-hand term with the same idea as in the previous parts by use of the definition of the process $R_h^x$. Let us divide the expectation into two terms, the first being
\begin{equation*}
    \E \left[\left( \sup_{s\in [0,t]} \sum_{i=1}^n  W_h ( x -X_i)\ind_{\{Y_i \le s \}} - H(s|x) \right)^2 \ind_{\{R_h^x \le 1/2 \}}\right] \le 4 \P\left(R^x_h \le 1/2 \right)
\end{equation*}
where we have used that $\sup_{s\in [0,t]} \left|\sum_{i=1}^n  W_h ( x -X_i)\ind_{\{Y_i \le s \}}-H(s|x)\right| \le 2$ a.s. and by Lemma \ref{lem:deviationRhx},
\begin{equation}\label{eq:deviatA1part5}
        \E \left[\left( \sup_{s\in [0,t]}\left|\sum_{i=1}^n  W_h ( x -X_i)\ind_{\{Y_i \le s \}} - H(s|x) \right| \right)^2 \ind_{\{R_h^x \le 1/2 \}}\right]  \le \frac{64 e^{-1}\left(\frac{C_k^2}{c_k^2}+\frac{C_k}{2c_k}\right)}{nh^p c_X}.
\end{equation}
For the second term, we re-use the definition of $R_h^x$ by adding and subtracting $\frac{\E \left[ K_h(x-X) \ind_{\{Y\le s \}}\right]}{\E \left[ K_h(x-X)\right]R_h^x} $ in the absolute value, which by the triangular inequality gives 
\begin{equation*}
    \begin{split}
          \E & \left[ \left( \sup_{s\in [0,t]} \left| \sum_{i=1}^n  W_h ( x -X_i)\ind_{\{Y_i \le s \}} - H(s|x) \right| \right)^2 \ind_{\{R_h^x \ge 1/2 \}}\right]
        \\ & = \E \left[ \left( \sup_{s\in [0,t]} \left| \frac{ 1 }{n \E \left[K_h(x-X)\right] R_h^x} \sum_{i=1}^n  K_h ( x -X_i)\ind_{\{Y_i \le s\}} -  H(s|x) \right|\right)^2 \ind_{\{R_h^x \ge 1/2 \}}\right]
        \\ &  \le  2\E \left[ \left( \sup_{s\in [0,t]} \left| \frac{ 1 }{n \E \left[K_h(x-X)\right] R_h^x} \sum_{i=1}^n  \left( K_h ( x -X_i)\ind_{\{Y_i \le s\}} - \E \left[ K_h(x-X) \ind_{\{Y\le s \}}\right]\right) \right| \right)^2 \ind_{\{R_h^x \ge 1/2 \}} \right]
        \\ & \qquad \qquad +  2\E \left[ \left( \sup_{s\in [0,t]}  \left|  \frac{ 1 }{n \E \left[K_h(x-X)\right] R_h^x} \E \left[ K_h(x-X) \ind_{\{Y\le s \}}\right]  - H(s|x) \right| \right)^2 \ind_{\{R_h^x \ge 1/2 \}}\right]
        \\ &  \le \underbrace{\frac{ 8 }{ \E \left[K_h(x-X)\right]^2}  \E \left[ \left( \sup_{s\in [0,t]} \left| \frac{ 1 }{n} \sum_{i=1}^n  \Big( K_h ( x -X_i)\ind_{\{Y_i \le s\}} - \E \left[ K_h(x-X) \ind_{\{Y\le s \}}\right]\Big) \right| \right)^2 \right]}_{\eqqcolon E_1}
        \\ & \qquad \qquad +  \underbrace{2\E \left[ \left( \sup_{s\in [0,t]}  \left|  \frac{ 1 }{ \E \left[K_h(x-X)\right] R_h^x} \E \left[ K_h(x-X) \ind_{\{Y\le s \}}\right]  - H(s|x) \right| \right)^2 \ind_{\{R_h^x \ge 1/2 \}}\right]}_{\eqqcolon E_2},
    \end{split}
\end{equation*}
By lemmas \ref{lem:BoundonEspK} and \ref{lem:smallballHolderversion}, we first have
\begin{equation*}
        E_1 \le \frac{8}{c_K^2 c_X^2}  \E \left[ \left( \sup_{s\in [0,t]} \left| \frac{ 1 }{n} \sum_{i=1}^n  \Big( K_h ( x -X_i)\ind_{\{Y_i \le s\}} - \E \left[ K_h(x-X) \ind_{\{Y\le s \}}\right]\Big) \right| \right)^2 \right].
\end{equation*}
The upper bound value of $E_1$ is in turn divided into two terms. For all $r>0$, denote the random event 
\begin{eqnarray*}
    \mathcal{E}_r:=\left\{\sup_{s\in [0,t]} \left|\sum_{i=1}^n  g\left(X_i,Y_i\right) - \E \left[g\left(X,Y \right)\right]\right| \leq r \right\}
\end{eqnarray*}
then we have that $E_1$ is bounded above by
\begin{eqnarray*}\label{eq:E1origine}
            &&\frac{8}{n^2 c_K^2 c_X^2} \left(\E \left[ \left( \sup_{s\in [0,t]} \left| \sum_{i=1}^n  g\left(X_i,Y_i\right) - \E \left[g\left(X,Y \right)\right]\Big) \right| \right)^2 \ind_{\mathcal{E}_r} \right]\right.\\
            &&+ \left.\E \left[ \left( \sup_{s\in [0,t]} \left| \sum_{i=1}^n  g\left(X_i,Y_i\right) - \E \left[g\left(X,Y \right)\right]\Big) \right| \right)^2 \ind_{(\mathcal{E}_r)^c} \right]\right)\\
            & \le &\frac{8 }{ c_K^2 c_X^2}\left(  \frac{r^2}{n^2} + 4 \P \left( \sup_{s\in [0,t]} \sum_{i=1}^n  g\left(X_i,Y_i\right) - \E \left[g\left(X,Y \right)\right] > r  \right)\right)
\end{eqnarray*}
and according to (\ref{eq:deviation}), we obtain
\begin{equation}\label{eq:E1final}
    E_1 \le \frac{8 }{ c_K^2 c_X^2}\left(  \frac{h^{2p}}{ C_{A,L}} + 4 \frac{L e^{1/2}}{n h^p}\right) .
\end{equation}
Adding and subtracting $ \frac{H(s|x)}{R_h^x} $, the term $E_2$ is in turn divided into two terms
\begin{equation*}
    \begin{split}
        E_2 & \le  2\E \left[ \left( \sup_{s\in [0,t]}  \left|  \frac{ 1 }{ \E \left[K_h(x-X)\right] R_h^x} \left(\E \left[ K_h(x-X) \ind_{\{Y\le s \}}\right]  - \E \left[K_h(x-X)\right]H(s|x) \right) \right| \right)^2 \ind_{\{R_h^x \ge 1/2 \}}\right] 
        \\ & \qquad \qquad + 2\E \left[ \left( \sup_{s\in [0,t]}  \left|  \frac{ 1 }{R_h^x} \left(H(s|x) -  H(s|x)R_h^x  \right) \right| \right)^2 \ind_{\{R_h^x \ge 1/2 \}}\right] 
        \\ & \le  \underbrace{2\E \left[ \left( \sup_{s\in [0,t]}  \left|  \frac{ 2 }{ \E \left[K_h(x-X)\right]} \left( \E \left[ K_h(x-X) \ind_{\{Y\le s \}}\right]  - \E\left[ K_h(x-X)\right]H(s|x)  \right)\right| \right)^2 \right] }_{\eqqcolon E_2^{(1)}}
        \\ & \qquad \qquad + \underbrace{2\E \left[ \left( \sup_{s\in [0,t]}  \left|  2 H(s|x) (1-R^h_x) \right| \right)^2 \right] }_{\eqqcolon E_2^{(2)}}.
    \end{split}
\end{equation*}
We remark that $E_2^{(1)}$ is no longer random, moreover by Lemma \ref{lem:errkernelestimH} we get 
\begin{equation*}
    E_2^{(1)} \le \frac{8h^{2\beta}C_{K,X,\beta}^2}{ \E \left[K_h(x-X)\right]^2}, 
\end{equation*}
which by use of Lemma \ref{lem:BoundonEspK} and Lemma \ref{lem:smallballHolderversion} gives
\begin{equation}\label{eq:E21}
    E_2^{(1)} \le \frac{8 h^{2\beta}C_{K,X,\beta}^2}{c_K^2 c_X^2}.
\end{equation}
For the term $E_2^{(2)}$, we bound $H(s|x)$ by one and see that the remaining does not depend on $s$ anymore. Moreover, since $\E \left[R_h^x\right]=1$, we have
\begin{equation*}
     E_2^{(2)} \le 8 \mathrm{Var} \left(R_h^x \right) \le  \frac{8 \E \left[K_h(x-X)^2\right]}{n \E \left[K_h(x-X)\right]^2},
\end{equation*}
Again thanks to lemmas \ref{lem:BoundonEspK} and \ref{lem:smallballHolderversion}, we get
\begin{equation}\label{eq:E22}
     E_2^{(2)} \le \frac{8C_K^2}{n h^p c_K^2 c_X} .
\end{equation}
Plugging \eqref{eq:deviatA1part5}, \eqref{eq:E1final}, \eqref{eq:E21} and \eqref{eq:E22} into \eqref{eq:reste} we have 
\begin{equation}\label{eq:FinalBoundforA5}
    \begin{split}
        \E & \left[ r_n(t|x)^2 \right] 
        \\ & \le 8C_H \left(\frac{1}{n h^p} \frac{64 e^{-1}\left(\frac{C_k^2}{c_k^2}+\frac{C_k}{2c_k}\right)}{ c_X  } + \frac{4 }{ c_K^2 c_X^2}\left(  \frac{h^{2p}}{ C_{A,L}} + 4 \frac{L e^{1/2}}{n h^p}\right) + \frac{4 h^{2\beta}C_{K,X,\beta}^2}{c_K^2 c_X^2} + \frac{4C_K^2}{n h^p c_K^2 c_X}  \right)
        \\ & \le C \left( h^{2 \beta} + \frac{1}{n h^p} \right),
    \end{split}
\end{equation}
where in the last line, we have used the fact that $h^p\le h^\beta$.\\ 

\subsubsection{Risk analysis for $\widehat\Lambda_n-\Lambda_n$}

By definition, the statistic $\widehat\Lambda_n-\Lambda_n$ is given by
\begin{eqnarray*}
    \int_0^t\dfrac{d[\widehat H^u_n-H^u_n](s|x)}{1-H_n(s|x)}&=&\sum_{i=1}^nW_h(x-X_i)\dfrac{[\widehat p_n-p](X_i,Y_i))\ind_{\{Y_i\leq t\}}}{1-H_n(Y_i|x)}\\
    &=&\sum_{i=1}^n\dfrac{W_h(x-X_i)\ind_{\{Y_i\leq t\}}}{(1-H_n(Y_i|x))}\sum_{j=1}^n\widetilde W_{b}((Y_i,X_i)-(Y_j,X_j))(\delta_j-p(X_i,Y_i)+\mu_j).
\end{eqnarray*}
For the ease of reading, we denote for any $i,j=1\ldots,n$
\begin{eqnarray*}
    \kappa_i:=K_h(x-X_i)\ind_{\{Y_i\leq t\}},\quad\quad\kappa_{i,j}:=K_{b}(X_i-Y_j)\widetilde K_b(Y_i-Y_j)
\end{eqnarray*}
and
\begin{eqnarray*}
    \omega_i:=W_h(x-X_i)\ind_{\{Y_i\leq t\}},\quad\quad\omega_{i,j}:=\widetilde W_{b}((Y_i,X_i)-(Y_j,X_j))
\end{eqnarray*}
so that
\begin{eqnarray*}
    \int_0^t\dfrac{d[\widehat H^u_n-H^u_n](s|x)}{1-H_n(s|x)}=\sum_{i=1}^n\dfrac{\omega_i}{(1-H_n(Y_i|x))}\sum_{j=1}^n\omega_{i,j}(\delta_j-p(X_i,Y_i)+\mu_j).
\end{eqnarray*}
Furthermore, we define
\begin{eqnarray*}
    R_{b,-i}^{y,x}:=\dfrac{1}{n-1}\sum_{j=1,j\neq i}\dfrac{K_b(x-X_j)\widetilde K_b(y-Y_j)}{\E[K_b(x-X)\widetilde K_b(y-Y)]},\quad i=1,\ldots,n.
\end{eqnarray*}
which allows to link the random variables introduced above with
\begin{eqnarray*}
    \omega_i=\dfrac{\kappa_i}{n\E[K_h(x-X)]R_h^x}
\end{eqnarray*}
and
\begin{eqnarray*}
    \omega_{i,j}=\dfrac{\kappa_{i,j}}{(n-1)\E_i[K_b(X_i-X)\widetilde K_b(Y_i-Y)]R_{b,-i}^{Y_i,X_i}+K_b(0)\widetilde K_b(0)}
\end{eqnarray*}
where $\E_i$ denotes the expectation conditionally on $(X_i,Y_i)$. The computation of the risk upper bound in this section is twofold. In the first part, we study the consistency of the random weights and in the second part, prove that the risk admits a convergence rate with order $1/\alpha^2nb^{1+p}$.\\  

\noindent
\textbf{Part 1} : to ease the forthcoming analysis, we consider a partition of the probability space based on the events $A_i:=\{R_h^x\geq 1/2,\, R_{b,-i}^{Y_i,X_i}\geq 1/2\}$ and show that statistics over the sets $A_i^c$ are negligible. By inclusion of the random events $A_i^c$ in $\{R_h^x\geq 1/2\}\cup\{R_{b,-i}^{Y_i,X_i}\geq 1/2\}$, we have
\begin{eqnarray*}
    \P(A_i^c)\leq \P(R_h^x\leq 1/2)+\P(R_{b,-i}^{Y_i,X_i}\leq 1/2)
\end{eqnarray*}
which by Jensen's inequality gives us
\begin{eqnarray*}
   \E\left[\left(\sum_{i=1}^n\dfrac{\omega_i\ind_{A_i^c}}{(1-H_n(Y_i|x))}\sum_{j=1}^n\omega_{i,j}(\delta_j-p(X_i,Y_i)+\mu_j)\right)^2\right]&\leq& C_H^2\E\left[\sum_{i=1}^n\omega_i\ind_{A_i^c}\sum_{j=1}^n\omega_{i,j}\left(\delta_j-p(X_i,Y_i)+\mu_j\right)^2\right]\\
   &=&C_H^2\E\left[\sum_{i=1}^n\omega_i\ind_{A_i^c}\sum_{j=1}^n\E_i\left[\omega_{i,j}(\delta_j-p(X_i,Y_i)+\mu_j)^2\right]\right].
\end{eqnarray*}
Note that by independence between $\{\mu_i\}_{1\leq i\leq n}$ and the survival data, we have for any $i=1,\ldots,n$
\begin{eqnarray*}
    \E_i\left[\omega_{i,j}(\delta_j-p(X_i,Y_i)+\mu_j)^2\right]=\E_i\left[\omega_{i,j}(\delta_j-p(X_i,Y_i))^2\right]+\dfrac{1}{\alpha^2}\E_i[\omega_{i,j}]
\end{eqnarray*}
which implies that
\begin{eqnarray*}
    \E\left[\sum_{i=1}^n\omega_i\ind_{A_i^c}\sum_{j=1}^n\E_i\left[\omega_{i,j}(\delta_j-p(X_i,Y_i)+\mu_j)^2\right]\right]\leq C_H^2(1+\alpha^{-2})\E\left[\sum_{i=1}^n\omega_i\ind_{A_i^c}\right].
\end{eqnarray*}
In the particular, the right-hand expectation is bounded up to
\begin{eqnarray}
\label{eq::boundriskprivate1}
    \nonumber\P(R_h^x\leq 1/2)+\sum_{i=1}^n\mathbb{E}\left[\omega_i\ind_{\{R_{b,-i}^{Y_i,X_i}\leq 1/2\}}\right]&\leq&e^{-c_1nh^p}+\sum_{i=1}^n\mathbb{E}\left[\omega_i\P_i\left(R_{b,-i}^{Y_i,X_i}\leq 1/2\right)\right]\\
    &\leq& e^{-c_1nh^p}+\sum_{i=1}^n\mathbb{E}\left[\omega_ie^{-c_2nb^{1+p}\left(f(X_i)g(Y_i|X_i)+\phi^{X_i,Y_i}_n\right)}\right]
\end{eqnarray}
where the $c_i$'s are positive constants derived from Lemma \ref{lem:deviationRhx}. Furthermore, let $c_3$ be a positive constant such that $\sup_{x,y}|\phi^{x,y}_n|\leq c_3<\inf_{y\leq b}g(y|x)f(x)$, then
\begin{eqnarray*}
    &&\sum_{i=1}^n\mathbb{E}\left[\omega_i\exp\left(-c_2nb^{1+p}(f(X_i)g(Y_i|X_i)+\phi^{X_i,Y_i}_n)\right)\right]\\
    &\leq&e^{-c_1nh^p}+\sum_{i=1}^n\mathbb{E}\left[\omega_i\exp\left(-c_2nb^{1+p}(f(X_i)g(Y_i|X_i)-c_3)\right)\ind_{\{R_h^x\geq 1/2\}}\right]\\
    &\leq&e^{-c_1nh^p}+\dfrac{2}{n(f(x)+\psi^x_n)c_K}\sum_{i=1}^n\mathbb{E}\left[\kappa_i\exp\left(-c_2nb^{1+p}(f(X_i)g(Y_i|X_i)-c_3)\right)\ind_{\{R_h^x\geq 1/2\}}\right]
\end{eqnarray*}
where
\begin{eqnarray*}
    &&\mathbb{E}\left[\kappa_i\exp\left(-c_2nb^{1+p}(f(X_i)g(Y_i|X_i)-c_3)\right)\ind_{\{R_h^x\geq 1/2\}}\right]\\
    &\leq&\int_{B(0,1)}K(u)f(x-uh)\int_0^tg(v|x-uh)\exp\left(-c_2nb^{1+p}(f(x-uh)g(v|x-uh)-c_3)\right)dvdu.
\end{eqnarray*}
Due to the model assumptions, we can also find a positive constant $c_4>0$ such that
\begin{eqnarray}
\label{eq::fgbound}
    f(x-uh)g(v|x-uh)\geq f(x)g(v|x)-c_4h^\beta\geq f(x)\inf_{y\leq b}g(y|x)-c_4h^\beta
\end{eqnarray}
implying
\begin{eqnarray*}
    &&\int_{B(0,1)}K(u)f(x-uh)\int_0^tg(v|x-uh)\exp\left(-c_2nb^{1+p}(f(x-uh)g(v|x-uh)-c_3)\right)dvdu\\
    &\leq&\exp\left(-c_2nb^{1+p}(f(x)\inf_{y\leq b}g(y|x)-c_3-c_4h^\beta)\right)\|f\|_\infty.
\end{eqnarray*}
Back to (\ref{eq::boundriskprivate1}), this ensures that
\begin{eqnarray}
\label{eq::boundriskprivate2}
    \nonumber&&\E\left[\left(\sum_{i=1}^n\dfrac{\omega_i\ind_{A_i^c}}{(1-H_n(Y_i|x))}\sum_{j=1}^n\omega_{i,j}(\delta_j-p(X_i,Y_i)+\mu_j)\right)^2\right]\\
    &\lesssim&(1+\alpha^{-2})\left(e^{-c_1nh^p}+e^{-c_2nb^{1+p}(f(x)\inf_{y\leq b}g(y|x)-c_3-c_4h^\beta)}\right)\lesssim\dfrac{1+\alpha^{-2}}{nh^p}.
\end{eqnarray}

\noindent
\textbf{Part 2} : according to part 1, we can assume $R_h^x\geq 1/2$ and $R_{b,-i}^{Y_i,X_i}\geq 1/2$ for any $i=1,\ldots,n$ without lost of generality. By Jensen's inequality and Lemma \ref{lem:smallballHolderversion} and \ref{lem:BoundonEspK}, we have
\begin{eqnarray*}
     \mathbb{E}\left[\left(\sum_{i=1}^n\omega_i\dfrac{[\widehat p_n-p](X_i,Y_i)}{1-H_n(Y_i|x)}\right)^2\right]\leq\dfrac{8C_H^2}{n(n-1)^2c_K^3c_{\widetilde K}^2(f(x)+\phi_n^x)} \sum_{i=1}^n\mathbb{E}\left[\kappa_i\dfrac{\E_i\left[\left(\sum_{j=1}^n\kappa_{i,j}[\delta_j-p(X_i,Y_i)+\mu_j]\right)^2\right]}{\left(f(X_i)g(Y_i|X_i)+\phi_n^{X_i,Y_i}+\varepsilon_n\right)^2}\right].
\end{eqnarray*}
Here, we compute the conditional expectation through a sum of i.i.d. centred random variables, in the sense that for any $i=1,\ldots,n$
\begin{eqnarray*}
    \E_i\left[\left(\sum_{j=1}^n \kappa_{i,j}[\delta_j-p(X_i,Y_i)+\mu_j]\right)^2\right]&=&\E_i\left[\left(\sum_{j=1,j\neq i}^n\kappa_{i,j}[\delta_j-p(X_i,Y_i)+\mu_j]\right)^2\right]\\
    &&+\left(K_b(0)\widetilde K_b(0)[\delta_i-p(X_i,Y_i)+\mu_i]\right)^2\\
    &=&(n-1)\E_i\left[\left(K_b(X_i-X)\widetilde K_b(Y_i-Y)[\delta-p(X_i,Y_i)+\mu]\right)^2\right]\\
    &&+\left(K_b(0)\widetilde K_b(0)[\delta_i-p(X_i,Y_i)+\mu_i]\right)^2
\end{eqnarray*}
where $(X,Y,\delta,\mu)$ is an independent random vector drawn from the survival model. Note here that Lemma \ref{lem:smallballHolderversion} and \ref{lem:BoundonEspK} also show that
\begin{eqnarray*}
   \E_i\left[\left(K_b(X_i-X)\widetilde K_b(Y_i-Y)[\delta-p(X_i,Y_i)+\mu]\right)^2\right]&\leq&(1+\alpha^{-2})\E_i\left[\left(K_b(X_i-X)\widetilde K_b(Y_i-Y)\right)^2\right]\\
   &\lesssim&(1+\alpha^{-2})b^{-(1+p)}\left(f(X_i)g(Y_i|X_i)+\phi_n^{X_i,Y_i}\right)
\end{eqnarray*}
implying
\begin{eqnarray*}
    &&\E_i\left[\left(\sum_{j=1}^n \kappa_{i,j}[\delta_j-p(X_i,Y_i)+\mu_j]\right)^2\right]\lesssim (1+\alpha^{-2})(n-1)b^{-(1+p)}\left(f(X_i)g(Y_i|X_i)+\phi_n^{X_i,Y_i}\right)+b^{-2(1+p)}.
\end{eqnarray*}
Hence
\begin{eqnarray*}
    &&\sum_{i=1}^n\mathbb{E}\left[\dfrac{\kappa_i\E_i\left[\left(\sum_{j=1}^n \kappa_{i,j}[\delta_j-p(X_i,Y_i)+\mu_j]\right)^2\right]}{\left(f(X_i)g(Y_i|X_i)+\phi_n^{X_i,Y_i}+\varepsilon_n\right)^2}\right]\\
    &\lesssim&(1+\alpha^{-2})n\E\left[K_h(x-X)\dfrac{(n-1)b^{-(1+p)}\left(f(X_i)g(Y_i|X_i)+\phi_n^{X_i,Y_i}\right)+b^{-2(1+p)}}{\left(f(X_i)g(Y_i|X_i)+\phi_n^{X_i,Y_i}+\varepsilon_n\right)^2}\ind_{\{Y\leq t\}}\right]\\
    &=&(1+\alpha^{-2})\dfrac{n(n-1)}{b^{1+p}}\E\left[K_h(x-X)\dfrac{f(X)g(Y|X)+\phi_n^{X,Y}+(nb^{1+p})^{-1})}{\left(f(X)g(Y|X)+\phi_n^{X,Y}+\varepsilon_n\right)^2}\ind_{\{Y\leq t\}}\right].
\end{eqnarray*}
Since the kernel functions have support in the unit ball, the above expectation is strictly positive whenever $X\in B(x,h)$. By (\ref{eq::fgbound}) this implies that for $n$ large enough, we have
\begin{eqnarray*}
    \dfrac{f(X)g(Y|X)+\phi_n^{X,Y}+(nb^{1+p})^{-1})}{\left(f(X)g(Y|X)+\phi_n^{X,Y}+\varepsilon_n\right)^2}\leq 4\dfrac{\| f\|_\infty \|g\|_\infty}{\left(f(x)\inf_{y\leq t_1}g(y|x)\right)^2},\quad a.s.
\end{eqnarray*}
which ensures with (\ref{eq::boundriskprivate2}) that
\begin{eqnarray}
\label{eq::riskpn2}
    &&\mathbb{E}\left[\left(\sum_{i=1}^nW_h(x-X_i)\dfrac{[p_n-p](X_i,Y_i)\ind_{\{Y_i\leq t\}}}{1-H_n(Y_i|x)}\right)^2\right]\lesssim \dfrac{1+\alpha^{-2}}{nb^{1+p}}.
\end{eqnarray}
Finally, the combined results in \eqref{eq:FinalBoundforA1}, \eqref{eq:FinalBoundforA2A3}, \eqref{eq:FinalBoundforA5} and (\ref{eq::riskpn2}) in \eqref{eq:almostsur} conclude the proof with
\begin{equation*}
    \E \left[ \left\| \widehat\Lambda_n(t|x) - \Lambda_T(t|x) \right\|_{[t_0,t_1]}^2  \right] \le  C (t_1-t_0) \left(h^{2\beta}+ \frac{1+\alpha^{-2}}{n  h^p}  \right).
\end{equation*}
\hfill\qed

\subsection{Proof of Theorem \ref{theorem:minimax}}
In this section, we denote by $F_i(\cdot |x)$ the conditional distribution function of a random variable $T_i$ given $x$ and $\Lambda_i(\cdot | x)$ its conditional cumulative hazard function.\\

\noindent
First, for a fixed $\kappa>0$, thanks to the Markov inequality, the problem remains to lower bound 
\begin{equation*}
    \inf_{\tilde Q \in \mathcal{\tilde Q_\alpha}}\inf_{ \widetilde \Lambda} \sup_{P \in \mathcal{P}_{\eta,\beta}} \E_{P,\tilde Q} \left[  \left\| \widetilde{\Lambda}_n(.|x)- \Lambda_{T(P)}(.|x)\right\|^2_{[t_0,t_1]} \right] \ge \kappa^2\underbrace{\inf_{\tilde Q \in \mathcal{\tilde Q_\alpha}}\inf_{ \widetilde \Lambda} \sup_{P \in \mathcal{P}_{\eta,\beta}}P \left( \left\| \widetilde{\Lambda}_n(.|x)- \Lambda_{T(P)}(.|x)\right\|_{[t_0,t_1]} \ge \kappa \right)}_{\eqqcolon \mathfrak{M}  }.
\end{equation*}
We also define this quantity for a fixed $\tilde Q$
\begin{equation*}
    \mathfrak{M}_{\tilde Q} \coloneqq \inf_{ \widetilde \Lambda} \sup_{P \in \mathcal{P}_{\eta,\beta}}P \left( \left\| \widetilde{\Lambda}_n(.|x)- \Lambda_{T(P)}(.|x)\right\|_{[t_0,t_1]} \ge \kappa \right).
\end{equation*}
The standard idea is to reduce the problem to a testing problem (see \cite{book:Tsybakov_nonparam} and \cite{art:Duchi_Jordan_Wainwright_MinimaxLocalPrivat} for $\alpha$-locally private setting). In particular, the Le Cam's bounds consider two distributions $P_0, P_1\in\mathcal{P}_{\eta,\beta}$ satisfying a $2\kappa$-separated assumption:
\begin{equation}\label{hyp:2kappa}
    \left\| \Lambda_0(\cdot|x_0) - \Lambda_1(\cdot|x_0) \right\|_{[t_0,t_1]}\ge 2 \kappa .
\end{equation}
To satisfy this inequality, we have to choose two random variables $P_0$ and $P_1$ in $\mathcal{P}_\beta$ satisfying the $2\kappa$-separated assumption. So that, we define  two triples $(T_0,C_0,X_0)$ and $(T_1,C_1,X_1)$ where $C_0$, $C_1$, $X_0$, $X_1$ will be chosen later. For $T_0$ and $T_1$, as in \cite{art:ChagnyRoche_AdaptiveMinimax} we define $T_0$ as an uniform law on $[t_0,t_1]$ (note that $P_0$ does not depend on $x$). We define $T_1$ by its conditional distribution function
\begin{equation*}
    F_1(t|x)= F_0(t) + \eta_n^\beta I\left(\frac{\|x-x_0\|}{\eta_n}\right)\int_{-\infty}^t \psi(s) \mathrm{d}s,
\end{equation*}
where $\eta_n$ is a non-negative number chosen later, $\psi$ is a compactly supported function on $[t_0,t_1]$ such that $\int_\R \psi(y) \mathrm{d}y = 0$ and $I : \R_+ \to \R $ is a compactly supported function on $[0,1]$ such that there exists $c\in \R_+$, $|I(x)-I(y)| \le c |x-y|^\beta$ for all $(x,y) \in \R_+^2$. For convenient reasons, and to explicit the constants, we define for now the function $\psi$ by the following formula: for all $t \in \R$
\begin{equation*}
    \psi (t) = (t - t_0/2 - t_1/2)\ind_{[t_0,t_1]}(t).
\end{equation*}
Thanks to the Hölder-type regularity of $I$, one can check that the map $x \mapsto F_1(t|\cdot)$ also has this property. To check the $2\kappa$-separated assumption \eqref{hyp:2kappa},  observe that the mean value theorem implies for all $t \in [t_0,t_1]$ and $x\in \R^d$:
\begin{equation*}
    \begin{split}
     | F_0(t)-F_1(t|x) | = | e^{-\Lambda_0(t|x)}- e^{-\Lambda_1(t|x)} |&  \le \sup_{s \in \Lambda_0([t_0,t_1])\cup \Lambda_1([t_0,t_1])}e^{-s} | \Lambda_0(t|x)-\Lambda_1(t|x) |
     \\ & \le   | \Lambda_0(t|x)-\Lambda_1(t|x) |.
    \end{split}
\end{equation*}
This implies 
\begin{equation*}
    \| \Lambda_0(\cdot|x)-\Lambda_1(\cdot|x) \|_{[t_0,t_1]} \ge \| F_0(\cdot|x)-F_1(\cdot|x) \|_{[t_0,t_1]}.
\end{equation*}
As a result, we check the $2\kappa$-separated assumption \eqref{hyp:2kappa} on the distribution functions for a fixed $x_0$. We have 
\begin{equation*}
    \begin{split}
    \| F_0(\cdot|x_0)-F_1(\cdot|x_0) \|_{[t_0,t_1]} & = \eta_n^\beta I(0) \left(\int_{t_0}^{t_1} \left(\int_{-\infty}^t \psi(s) \mathrm{d}s \right)^2 \mathrm{d}t \right)^{1/2}
    \\ & = \eta_n^\beta I(0) \left(\int_{t_0}^{t_1} \left(\frac{(t-t_1)(t-t_0)}{2}\right)^2 \mathrm{d}t \right)^{1/2}
    \\ & = \frac{\eta_n^\beta I(0)  (t_1-t_0)^{5/2}}{2\sqrt{30}}.
    \end{split}
\end{equation*}
The choice $\eta_n^\beta=\frac{4\kappa \sqrt{30} }{I(0)  (t_1-t_0)^{5/2}}$ guarantees the $2\kappa$-separated inequality \eqref{hyp:2kappa}. Now, under this assumption \cite[sec. 2.2]{book:Tsybakov_nonparam} ensure that, 
\begin{equation*}
    \mathfrak{M}_{\tilde Q} \ge \frac{1}{2}\left(1-\| M_0^n - M_1^n \|_{TV} \right)
\end{equation*}
where $M_i^n$ is the joint law of $\alpha$-privatized mechanism $\widetilde Q$ of $n$ observations $\left(P_{i,1}^{(obs)}, \dots, P_{i,n}^{(obs)} \right)$ where each $P_{i,j}^{(obs)}$ is distributed according to $\mathcal{P}^{(obs)}_\beta$. By definition, $M_0$ and $M_1$ are absolutely continuous with respect to the same law with densities denoted $m_0$ and $m_1$ respectively. Then, one can use the Pinsker inequality 
\begin{equation}\label{eq:privateDKL}
    \begin{split}
        \mathfrak{M}_Q & \ge \frac{1}{2}\left(1-\frac{1}{2} \sqrt{ D_{\mathrm{kl}}(M_0^n \| M_1^n)+D_{\mathrm{kl}}(M_1^n \| M_0^n)}\right)
        \\ & \ge \frac{1}{2}\left(1-\frac{\sqrt{n}}{2}\sqrt{ D_{\mathrm{kl}}(M_0 \| M_1)+D_{\mathrm{kl}}(M_1 \| M_0)}\right).
    \end{split}
\end{equation}
It remains to upper bound the sum of the two divergences, we proceed as \cite[App A.1]{art:Duchi_Jordan_Wainwright_MinimaxLocalPrivat}: by definition of the Kullback-Leibler divergence,
\begin{equation} \label{eq:DKL}
    \begin{split}
     D_{\mathrm{kl}}(M_0 \| M_1)& +D_{\mathrm{kl}}(M_1 \| M_0)
     \\ & = \int m_0(t,z,x)\log \left(\frac{m_0(t,z,x)}{m_1(t,z,x)}\right) \mathrm{d}z \mathrm{d}t \mathrm{d}\P_X(x)+ \int m_1(t,z,x)\log \left(\frac{m_1(t,z,x)}{m_0(t,z,x)}\right) \mathrm{d}z \mathrm{d}t \mathrm{d}\P_X(x)
    \\ & = \int \left(m_0(t,z,x)-m_1(t,z,x)\right)\log \left(\frac{m_0(t,z,x)}{m_1(t,z,x)}\right) \mathrm{d}z \mathrm{d}t \mathrm{d}\P_X(x)
    \\ & \le \int \frac{\left(m_0(t,z,x)-m_1(t,z,x)\right)^2}{\min(m_0(t,z,x),m_1(t,z,x))}\mathrm{d}z \mathrm{d}t \mathrm{d}\P_X(x), 
    \end{split}
\end{equation}
where in the last line we used \cite[App A, Lemma 4]{art:Duchi_Jordan_Wainwright_MinimaxLocalPrivat}.
For now, we define the censor law of our two models by $C_1=T_0$ and $C_0=T_1$. Thanks to this choice, the law of the observation time (the minimum between the real survival time $T$ and the censor $C$) is the same. This makes the computation of the difference between the densities of the observations much smaller. Indeed, by definition, we have
\begin{equation*}
    \begin{split}
        & m_0(t,z,x)- m_1(t,z,x)  = \sum_{i=0}^1  q(z|i) \left(  f_0^{(o b s)}(t,i,x) -  f_1^{(obs)}(t,i,x) \right)  
        \\ & = \sum_{i=0}^1  q(z|i) \left(\left( \bar F_{T_1}(t|x) f_{T_0}(t|x)\right)^{i} \left( \bar F_{T_0}(t|x) f_{T_1}(t|x) \right)^{1-i} - \left(\bar F_{T_0}(t|x) f_{T_1}(t|x)\right)^{i} \left( \bar F_{T_1}(t|x) f_{T_0}(t|x) \right)^{1-i} \right)  
        \\ & = \sum_{i=0}^1   q(z|i) \underbrace{\left(\left( \bar F_{T_1}(t|x) f_{T_0}(t|x) - \bar F_{T_0}(t|x) f_{T_1}(t|x) \right)^{i}\left( \bar F_{T_0}(t|x) f_{T_1}(t|x) - \bar F_{T_1}(t|x) f_{T_0}(t|x) \right)^{1-i} \right)}_{\coloneqq h(t,i,x)}  .
    \end{split}
\end{equation*}
Following the idea of Lemma $3$ of \cite{art:Duchi_Jordan_Wainwright_MinimaxLocalPrivat}, let us decompose the function $h$ into its negative and positive parts,
\begin{equation*}
    \begin{split}
        & m_0(t,z,x)- m_1(t,z,x)  = \sum_{i=0}^1  q(z| i)h^+(t,i,x)   +    \sum_{i=0}^1  q(z|i)  h^-(t,i,x) 
        \\ & \le \sup_{\delta \in \{ 0,1\}} q(z|\delta)   \sum_{i=0}^1 h^+(t,i,x)  + \inf_{\delta \in \{ 0,1\}} q(z|\delta)  \sum_{i=0}^1h^-(t,i,x)
        \\ & \le \left(\sup_{\delta \in \{ 0,1\}} q(z|\delta) - \inf_{\delta\in \{ 0,1\}} q(z|\delta)   \right)  \sum_{i=0}^1 h^+(t,i,x),
    \end{split}
\end{equation*}
where in the last line, we observe that $\sum_{i=0}^1 h^+ (t,i,x)  = - \sum_{i=0}^1 h^-(t,i,x) $ since $\sum_{i=0}^1 h =0$. We also remark that $\sum_{i=0}^1 h^+(t,\delta,x) = \left|\bar F_{T_1}(t|x) f_{T_0}(t|x) - \bar F_{T_0}(t|x) f_{T_1}(t|x) \right|$. Moreover, with the help of the definition of $\alpha$-local differential privacy for the indicator censor, we have:
\begin{equation*}
    \begin{split}
        & |m_0(t,z,x)- m_1(t,z,x) | \le \left| \inf_{\delta \in \{ 0,1\}} q(z|\delta) \left(\frac{\sup_{\delta\in \{ 0,1\}} q(z|\delta)}{ \inf_{\delta' \in \{ 0,1\}} q(z|\delta') } - 1 \right) \right|\left|\bar F_{T_1}(t|x) f_{T_0}(t|x) - \bar F_{T_0}(t|x) f_{T_1}(t|x) \right|
        \\ & \le   \inf_{\delta \in \{ 0,1\}} q(z|\delta) \left|\sup_{\delta,\delta' \in \{ 0,1\}} \frac{q(z|\delta)}{q(z|\delta')} - 1 \right| \left|\bar F_{T_1}(t|x) f_{T_0}(t|x) - \bar F_{T_0}(t|x) f_{T_1}(t|x) \right|
        \\ & \le  \inf_{\delta \in \{ 0,1\}} q(z|\delta) (e^\alpha - 1 ) \left|\bar F_{T_1}(t|x) f_{T_0}(t|x) - \bar F_{T_0}(t|x) f_{T_1}(t|x) \right|.
    \end{split}
\end{equation*}
With our choice of $T_0$ and $T_1$ we have
\begin{equation*}
    \begin{split}
        \left|\bar F_{T_1}(t|x) f_{T_0}(t|x) - \bar F_{T_0}(t|x) f_{T_1}(t|x) \right| = \left| \frac{\eta_n^\beta}{t_1-t_0} I\left(\frac{\|x-x_0\|}{\eta_n} \right)\left(\int_{-\infty}^t \psi(s)\mathrm{d}s + (t-b)\psi(t) \right)\right|.
    \end{split}
\end{equation*}
Furthermore, for all $x\in\mathcal{S}_X$ and $t \in [t_0,t_1]$, we have
\begin{equation*}
    \min(m_0(t,z,x),m_1(t,z,x)) \ge  \inf_{\delta \in \{ 0,1\}} q(z|\delta) \left( \bar F_{T_1}(t|x) f_{T_0}(t) + \bar F_{T_0}(t) f_{T_1}(t|x) \right),
\end{equation*}
and 
\begin{equation*}
    \begin{split}
        \bar F_{T_1}& (t|x) f_{T_0}(t) + \bar F_{T_0}(t) f_{T_1}(t|x)
        \\ & = 2 F_{T_0}(t)f_{T_0}(t) + F_{T_0}(t) \eta^\beta_n I\left(\frac{\|x-x_0 \|}{\eta_n} \right) \psi(s) + f_{T_0}(t) \eta^\beta_n I \left( \frac{\|x-x_0 \|}{\eta_n}  \right) \int_0^t \psi (s) \mathrm{d}s
        \\ & = 2 F_{T_0}(t)f_{T_0}(t) + F_{T_0}(t) \eta^\beta_n I\left(\frac{\|x-x_0 \|}{\eta_n} \right) \psi(t) + f_{T_0}(t) \eta^\beta_n I \left( \frac{\|x-x_0 \|}{\eta_n}  \right) \int_0^t \psi (s) \mathrm{d}s
        \\ & = 2 F_{T_0}(t)f_{T_0}(t) + \eta^\beta_n I\left(\frac{\|x-x_0 \|}{\eta_n} \right) \left( F_{T_0}(t) \psi(t) + f_{T_0}(t) \int_0^t \psi (s) \mathrm{d}s \right)
        \\ & = \frac{t_1-t}{(t_1-t_0)^2} \left(2- \frac{1}{2}\eta^\beta_n I\left(\frac{\|x-x_0 \|}{\eta_n}\right)(t_1-t)(t_1-t_0) \right)
        \\ & \ge \frac{t_1-t}{(t_1-t_0)^2} \left(2- \frac{2b \kappa \sqrt{30}\|I \|_{\infty} }{I(0)  (t_1-t_0)^{3/2}}  \right).
    \end{split}
\end{equation*}
Then for $\kappa \le \frac{I(0)  (t_1-t_0)^{3/2}} {2b \sqrt{30}\|I \|_{\infty} }$ we finally get
\begin{equation*}
    \min(m_0(t,z,x),m_1(t,z,x)) \ge  \inf_{\delta \in \{ 0,1\}} q(z|\delta) \frac{t_1-t}{(t_1-t_0)^2}.
\end{equation*}
Back to \eqref{eq:DKL}, we have
\begin{equation*}
    \begin{split}
     D_{\mathrm{kl}}& (M_0 \| M_1) +D_{\mathrm{kl}}(M_1 \| M_0)
     \\ & \le (e^\alpha - 1 )^2  \int_{\R \times [t_0,t_1] \times \R^d }  \inf_{\delta \in \{ 0,1\}} q(z|\delta)\frac{(t_1-t_0)^2}{t_1-t} \left(\bar F_{T_1}(t|x) f_{T_0}(t|x) - \bar F_{T_0}(t|x) f_{T_1}(t|x) \right)^2 \mathrm{d}z \mathrm{d}t \mathrm{d}\P_X(x) 
     \\ & \le  (e^\alpha - 1 )^2 \eta_n^{2 \beta} \int_{[t_0,t_1] \times \R^d }   I\left(\frac{\|x-x_0\|}{\eta_n} \right)^2 \frac{1}{t_1-t}\left(\int_{a}^t \psi(s)\mathrm{d}s + (t-b)\psi(t) \right)^2 \mathrm{d}t \mathrm{d}\P_X(x) \int_{\R}  \inf_{\delta \in \{ 0,1\}} q(z|\delta)  \mathrm{d}z 
     \\ & \le  (e^\alpha - 1 )^2 \eta_n^{2 \beta}  \E \left[ I\left(\frac{\|X-x_0\|}{\eta_n} \right)^2 \right]  \int_{[t_0,t_1] } (t_1-t)\left(\frac{1}{2}(t-a) + \left(t- \frac{a+b}{2} \right) \right)^2 \mathrm{d}t 
     \\ & \le \frac{ 1}{16} \eta_n^{2 \beta} (e^\alpha - 1 )^2  (t_1-t_0)^4  \E \left[ I\left(\frac{\|X-x_0\|}{\eta_n} \right)^2 \right]  .
    \end{split}
\end{equation*}
With the inequality $e^x-1\le e x$ for $x\in [0,1]$ and the fact that $I$ is compactly supported on $[0,1]$ we obtain, 
\begin{equation*}
    \begin{split}
     D_{\mathrm{kl}}& (M_0 \| M_1) +D_{\mathrm{kl}}(M_1 \| M_0)
     \\ &  \le \frac{ e^2 }{16} \alpha^2 \eta_n^{2 \beta} (t_1-t_0)^4  \E \left[ \|I\|_{\infty}^2\ind_{\{ \|X-x_0\| \le \eta_n \} } \right] 
     \\ &  \le \frac{ e^2  }{16} C_X^2  \alpha^2 \eta_n^{2 \beta + \gamma} (t_1-t_0)^4  \|I\|_{\infty}^2,
    \end{split}
\end{equation*}
where we used the small ball property $\mathcal{I}_X$ in the last line. By replacing $\eta_n$ with its definition, we get
\begin{equation} \label{eq:finalDKL}
    \begin{split}
     D_{\mathrm{kl}}& (M_0 \| M_1) +D_{\mathrm{kl}}(M_1 \| M_0)
     \\ & \le \alpha^2 \kappa^{\frac{\gamma + 2\beta}{\beta}} \underbrace{ \left(\frac{4 \sqrt{30} }{I(0)  (t_1-t_0)^{5/2}} \right)^{\frac{\gamma + 2\beta}{\beta}} \frac{ e^2  }{16} C_X^2   (t_1-t_0)^4  \|I\|_{\infty}^2 }_{\eqqcolon C_{a,b,I}} ,
    \end{split}
\end{equation}
plugging \eqref{eq:finalDKL} into \eqref{eq:privateDKL}, 
\begin{equation*}
     \inf_{\tilde Q \in \mathcal{\tilde Q_\alpha}}\inf_{ \hat \Lambda} \sup_{P \in \mathcal{P}_\beta} \E_{P,\tilde Q} \left[  \left\| \hat\Lambda_n^{x_0}(M)- \Lambda_P^{x_0} \right\|^2_{D} \right] \ge  \frac{\kappa^2}{2}\left(1-\frac{1}{2}\sqrt{\kappa^{\frac{\gamma + 2\beta}{\beta}} n \alpha^2 C_{a,b,I} }\right).
\end{equation*}
Finally, the choice
\begin{equation*}
    \kappa = \left(\frac{1}{ C_{a,b,I} n \alpha^2 }\right)^{\frac{\beta}{\gamma + 2\beta}} \land \frac{I(0)  (t_1-t_0)^{3/2}} {2b \sqrt{30}\|I \|_{\infty} },
\end{equation*}
gives the expected result. \hfill\qed 

\bibliographystyle{alpha}
\bibliography{Biblio.bib}

\end{document}